\newcommand{\eps}{\varepsilon}
\theoremstyle{plain}
\newtheorem{thm}{Theorem}[section]
\newtheorem{lem}[thm]{Lemma}
\newtheorem{conj}[thm]{Conjecture}
\noindent \emph{Proof.} {}{#1}{}}{\hfill
\theoremstyle{plain} 
\newcommand{\thistheoremname}{}
\newtheorem{genericthm}[section]{\thistheoremname}
\theoremstyle{definition}
\newcounter{counter}
\newcommand{\less}{\setminus}
 \def\dfn#1{{\sl #1}}
\title{Minimizing the number of edges in  $(C_4, K_{1,k})$-co-critical graphs}
\author{Gang Chen$^{a,}$\thanks{Supported by NSFC under the grant numbers 12161066 and 12061056  and Ningxia Natural Science Foundation under grant numbers 2021AAC03016 and 2023AAC020001. E-mail address: {\tt  chen\_g@nxu.edu.cn.}}\hskip 1cm Chenchen Ren$^{a,}$\thanks{E-mail address: {\tt  ccrjyb528@163.com}.}\hskip 1cm   Zi-Xia Song$^{b,}$\thanks{Supported by  NSF grant    DMS-2153945. E-mail address: {\tt Zixia.Song@ucf.edu}.}}
  \affil{
  { \small $^a${School of Mathematics and Statistics, Ningxia University, Yinchuan, Ningxia 750021, China}}
   \\{ \small $^b${Department  of Mathematics, University of Central Florida, Orlando, FL 32816, USA}}
     }
\date{}
\begin{document}
\maketitle
\begin{abstract}
Given graphs $H_1, H_2$, a \{red, blue\}-coloring of the edges of a graph $G$  is a critical coloring  if $G$ has neither  a red $H_1$ nor a blue $ H_2$. A non-complete graph $G$ is $(H_1, H_2)$-co-critical if  $G$ admits a critical coloring,  but $G+e$ has no critical coloring for    every edge $e$ in the complement of $G$.
 Motivated by a conjecture of Hanson and Toft from 1987, we    study  the minimum number of edges over all  $(C_4, K_{1,k})$-co-critical graphs on $n$ vertices. We show that for all $k \ge 2 $ and $ n \ge k +\lfloor \sqrt {k-1} \rfloor +2$, if $G$ is a $(C_4,K_{1,k})$-co-critical graph on $n$ vertices, then \[e(G) \ge \frac{(k+2)n}2-3- \frac{(k-1)(k+ \lfloor \sqrt {k-2}\rfloor)}2.\] Moreover, this linear bound is asymptotically best possible for all  $k\ge3$ and $n\ge3k+4$. It is worth noting that our constructions for the case when $ k$ is even   have at least three different  critical colorings. For $k=2$, we obtain the sharp bound for the minimum number of edges  of $(C_4, K_{1,2})$-co-critical graphs on $n\ge5 $ vertices by showing that all such graphs have at least $2n-3$ edges. Our proofs  rely on the structural properties of  $(C_4,K_{1,k})$-co-critical graphs and a result of Ollmann  on the minimum number of edges of $C_4$-saturated graphs.

\end{abstract}
{\bf AMS Classification}: 05C55; 05C35.\\
{\bf Keywords}:  co-critical graphs; $C_4$-saturated graph; Ramsey number

\baselineskip 16pt

\section{Introduction}

All graphs considered in this paper are finite, and without loops or multiple edges. For a graph $G$, we will use $V(G)$ to denote the vertex set, $E(G)$ the edge set, $|G|$ the number of  vertices of $G$, $e(G)$ the number of edges of $G$, $N_G(x)$ the neighborhood of vertex $x$ in $G$, $\delta(G)$ the minimum degree, $\Delta(G)$ the maximum degree, $\alpha(G)$ the independence number, and $\overline{G}$ the complement of $G$. If $A, B \subseteq V(G)$ are disjoint, we say that $A$ is \dfn{complete to} $B$ if every vertex in $A$ is adjacent to every vertex in $B$; and $A$ is \dfn{anti-complete to} $B$ if no vertex in $A$ is adjacent to a vertex in $B$. The subgraph of $G$ induced by $A$, denoted $G[A]$, is the graph with vertex set $A$ and edge set $\{xy \in E(G): x, y \in A\}$. We denote by $B \less A$  the set $B - A$,  and $G \less A$ the subgraph obtained from $G$ by deleting all vertices in $A$, respectively.   For any edge $e$ in $\overline{G}$, we use $G+e$ to denote the graph obtained from $G$ by adding the new edge $e$. The {\dfn{join}} $G+H$ (resp.~{\dfn{union}} $G\cup H$) of two vertex disjoint graphs $G$ and $H$ is the graph having vertex set $V(G) \cup V(H)$  and edge set $E(G) \cup E(H) \cup \{xy:   x\in V(G),  y\in V(H)\}$ (resp. $E(G)\cup E(H)$). Given two isomorphic graphs G and H, we may (with a slight but common abuse of notation) write $G = H$.   We use $K_n$,  $C_n$, $P_n$ and $T_n$ to denote the complete graph,    cycle, path and a tree on  $n$ vertices, respectively; and $K_{m,n}$ to denote the complete bipartite graph with partite sets of order $m,n$.  For any positive integer $k$, we write $[k]$ for the set $\{1, 2, \dots, k\}$. We use the convention $``A:="$ to mean that $A$ is defined to be the right-hand side of the relation.\medskip

Given an integer $k \ge 1 $ and graphs $G$, ${H}_1, \ldots, {H}_k$, we write \dfn{$G \rightarrow ({H}_1, \ldots, {H}_k)$} if every $k$-coloring of $E(G)$ contains a monochromatic  ${H}_i$ in color $i$ for some $i\in[k]$. The classical \dfn{Ramsey number}  $r({H}_1, \dots, {H}_k)$ is the minimum positive integer $n$ such that $K_n \rightarrow ({H}_1, \dots, {H}_k)$. Following~\cite{Galluccio1992,Nesetril1986}, a non-complete graph $G$ is $(H_1, \dots, H_k)$-\dfn{co-critical} if $G \not\rightarrow ({H}_1,\dots, {H}_k)$, but $G + e \rightarrow ({H}_1,\dots, {H}_k)$ for every edge e in $\overline{G}$. The notion of co-critical graphs was initiated by Ne$\check{s}$et$\check{r}$il~\cite{Nesetril1986} in 1986.  We refer the reader to a recent paper by Zhang and and the third author~\cite{SongZhang21} for further background on $(H_1, \dots, H_k)$-co-critical graphs. Hanson and Toft~\cite{Hanson1987} from 1987 observed that for all $n \ge r:=r(K_{t_1},\dots,K_{t_k})$, the graph $K_{r-2}+ \overline K_{n-r+2}$ is $(K_{t_1},\dots, K_{t_k})$-co-critical with $(r - 2)(n - r + 2) + \binom{r - 2}{2}$ edges. They further   conjectured that no $(K_{t_1},\dots, K_{t_k})$-co-critical graph on $n$ vertices can have fewer than $e(K_{r-2}+\overline K_{n-r+2})$ edges.

\begin{conj}[Hanson and Toft~\cite{Hanson1987}]\label{HTC}  
Let $r = r(K_{t_1}, \dots, K_{t_k})$. Then every $(K_{t_1},\dots, K_{t_k})$-co-critical graph on $n$ vertices has at least
\begin{align*}
(r - 2)(n - r + 2) + \binom{r - 2}{2}
\end{align*}
edges. This bound is best possible for every $n$.
\end{conj}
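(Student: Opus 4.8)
The statement bundles two claims: a universal lower bound of $(r-2)(n-r+2)+\binom{r-2}{2}$ edges for every $(K_{t_1},\dots,K_{t_k})$-co-critical graph on $n$ vertices, and the sharpness of that bound for every $n$. The plan is to settle sharpness first, which is short, and then to describe the attack on the lower bound, which is the real content.

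Sharpness is already half-recorded above: Hanson and Toft's graph $G_0:=K_{r-2}+\overline{K}_{n-r+2}$ has exactly $(r-2)(n-r+2)+\binom{r-2}{2}$ edges, so it suffices to re-confirm that $G_0$ is co-critical. For the critical coloring I would start from a critical coloring of $K_{r-1}$ (which exists since $r-1<r=r(K_{t_1},\dots,K_{t_k})$), view the $r-2$ apex vertices of $G_0$ as $r-2$ of those vertices, and make every independent vertex mimic the remaining one; since $\overline{K}_{n-r+2}$ contributes no edges, every clique of $G_0$ embeds into the colored $K_{r-1}$, so no monochromatic $K_{t_i}$ in color $i$ appears. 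Conversely, any added non-edge $uv$ lies inside $\overline{K}_{n-r+2}$, and then $\{u,v\}$ together with the apex induces a $K_r$, so $G_0+uv\to(K_{t_1},\dots,K_{t_k})$.

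For the lower bound I would fix a co-critical $G$ with a critical coloring $c$ and exploit the defining failure of extendability: for every non-edge $uv$ and every color $i\in[k]$, coloring $uv$ with color $i$ must complete a new monochromatic $K_{t_i}$, forcing a color-$i$ clique $K_{t_i-2}$ inside $N_G(u)\cap N_G(v)$ whose edges to both $u$ and $v$ are also color $i$. These local obligations say that the common neighborhood of every non-edge is dense in each color; the aim is then to convert them into a global count. The natural target is to locate a set $S$ of $r-2$ vertices complete to $V(G)\less S$, recovering the $K_{r-2}+\overline{K}_{n-r+2}$ skeleton, after which the edge bound follows; a discharging scheme that hands each vertex its $r-2$ mandatory incidences is the alternative.

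The hard part, and the reason the conjecture is still open in general, is exactly this passage from local density to a clean global structure: for arbitrary $r$ and arbitrary $t_i$ the constraints attached to different non-edges overlap in ways that resist forcing an apex set, and no counting argument is known to close the gap uniformly. I would therefore expect a complete proof only under restrictions — small $r$, where $K_{t_i-2}$ degenerates to a single vertex or an edge — or after replacing one clique by a sparse graph, as the present paper does, where Ollmann's bound on $C_4$-saturated graphs furnishes precisely the global edge count that the clique-only argument cannot supply.
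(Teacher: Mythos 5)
The statement you were asked to prove is a conjecture, not a theorem: the paper presents it purely as motivation and says explicitly that it ``remains wide open,'' so there is no proof in the paper to compare against. Your proposal handles this situation correctly. The sharpness half is the only part that admits a proof, and your argument for it is right; it is precisely Hanson and Toft's observation, recounted in the paper, that $G_0:=K_{r-2}+\overline{K}_{n-r+2}$ is $(K_{t_1},\dots,K_{t_k})$-co-critical for $n\ge r$ with $(r-2)(n-r+2)+\binom{r-2}{2}$ edges. Your coloring --- fix a critical coloring of $K_{r-1}$, which exists since $r-1<r$, identify the apex $K_{r-2}$ with $r-2$ of its vertices, and let every vertex of $\overline{K}_{n-r+2}$ mimic the remaining vertex --- works because each clique of $G_0$ contains at most one vertex of the independent set and therefore embeds color-preservingly into the colored $K_{r-1}$, so no monochromatic $K_{t_i}$ in color $i$ arises; and every non-edge $uv$ lies inside the independent set, so $G_0+uv$ contains a $K_r$ on $\{u,v\}$ and the apex, whence $G_0+uv\rightarrow(K_{t_1},\dots,K_{t_k})$ by the definition of $r$. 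The only point worth adding is that $n\ge r$ is needed so that $G_0$ is non-complete, which the notion of co-critical requires.

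For the lower bound you rightly decline to claim a proof, since none exists. Your description of the local constraints --- for each non-edge $uv$ and each color $i$, coloring $uv$ with $i$ must force a color-$i$ clique $K_{t_i-2}$ inside $N(u)\cap N(v)$ attached to $u$ and $v$ in color $i$ --- is the standard starting point, and your diagnosis of why these local obligations resist globalization matches the state of the art reported in the paper: only the case $(K_3,K_3)$, for $n\ge 56$, is known, and the paper itself proves edge lower bounds only for the pairs $(C_4,K_{1,k})$, where Ollmann's theorem on $C_4$-saturated graphs supplies the global count that the clique-only argument cannot, exactly as you anticipate. In short: your proposal is complete on the sharpness claim, and on the lower bound it is an accurate assessment of an open problem rather than a gap in your reasoning.
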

\medskip

Conjecture~\ref{HTC} remains wide open.  It was shown in \cite{Chen2011} that every $(K_3,K_3)$-co-critical graph on $n\ge 56$ vertices has at least $4n-10$ edges. This settles the first non-trivial case of Conjecture~\ref{HTC} for sufficiently large $n$.    
Inspired by Conjecture~\ref{HTC}, Rolek and the third author~\cite{RolekSong18}   initiated the study of the minimum number of possible edges over all $(K_t, \mathcal{T}_k)$-co-critical graphs, where $\mathcal{T}_k$ denotes the family of all trees on $k$ vertices.  They  proved that for all $k\ge 4$ and $n$ sufficiently large, every $(K_3, \mathcal{T}_k)$-co-critical graph   on $n $ vertices has at least $(\frac{3}{2}+\frac{1}{2}\left\lceil \frac{k}{2}\right\rceil)n- (\frac{1}{2} \lceil\frac{k}{2}\rceil + \frac{3}{2})k-2$  edges.  This bound is asymptotically best possible for all such $k,n$. 
 Shortly after, Zhang and the third author~\cite{SongZhang21}   proved that for all $t\ge4$ and $k\ge  \max\{6,t\}$, there exists a constant $\ell(t,k)$ such that, for all   $n\ge (t-1)(k-1)+1$, if $G$ is a $(K_t, \mathcal{T}_k)$-co-critical graph on $n$ vertices, then
\begin{align*}
e(G)\ge \left(\frac{4t-9}{2}+\frac{1}{2}\left\lceil \frac{k}{2}\right\rceil\right)n-\ell(t,k).
\end{align*}
This bound is asymptotically best possible when $t\in \{4,5\}$ and all $k\ge 6$ and $n\ge(2t-3)(k-1)+\lceil {k/2} \rceil\lceil {k/2}\rceil-1 $.
 Chen, Ferrara, Gould, Magnant and Schmitt~\cite{Chen2011} in 2011 proved  that every  $(K_3,K_{1,2})$-co-critical graph on $n\ge 11$ vertices has at least $\left\lfloor{5n/2}\right\rfloor-5$ edges. This bound is sharp for all $n\ge 11$.
Very recently, Davenport,   Yang and the third author~\cite{DSY22} continued the work  to determine  the minimum number of possible edges over all $(K_t,K_{1,k})$-co-critical graphs on $n$ vertices.  

\begin{thm}[Davenport, Song and Yang~\cite{DSY22}]\label{ktstar}
For all $t\ge3$ and $k\ge3$, there exists a constant $\ell(t, k)$ such that  for all $n\in \mathbb{N}$ with  $n \ge (t-1)k+1$, if $G$ is a 
  $(K_t,K_{1,k})$-co-critical graph on  $n $ vertices, then    \[e(G)\geq \left(2t-4+\frac{k-1}{2}\right)n-\ell(t,k).\]
   This bound is asymptotically best possible    when $t\in\{3,4,5\}$ and all $k\ge3$ and $n \ge (2t-2)k+1$. 
 \end{thm}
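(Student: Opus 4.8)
The plan is to fix a critical coloring of $G$, extract the two local consequences of co-criticality, and then bound the red and blue edges separately. Write $R$ for the spanning subgraph of red edges and $B$ for the spanning subgraph of blue edges, so that $R$ is $K_t$-free and $\Delta(B)\le k-1$. For every non-edge $uv$ of $G$ we have $G+uv\rightarrow(K_t,K_{1,k})$; in particular, extending the fixed coloring by coloring $uv$ blue must create a blue $K_{1,k}$, so at least one of $u,v$ has blue degree exactly $k-1$, while extending it by coloring $uv$ red must create a red $K_t$, forcing $u$ and $v$ to have a \emph{common red $K_{t-2}$} (a set of $t-2$ vertices inducing a red clique and red-complete to both $u$ and $v$). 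The latter already gives $\deg_R(v)\ge t-2$ for every $v$ with a non-neighbor.

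For the blue edges, let $L:=\{v\in V(G):\deg_B(v)\le k-2\}$. By the blue consequence above, no non-edge of $G$ has both endpoints in $L$, so $L$ is a clique of $G$. Now $R[L]$ is $K_t$-free, yet every vertex of $L$ has red degree within $L$ at least $(|L|-1)-(k-2)=|L|-k+1$; since Tur\'an's theorem guarantees a $K_t$-free graph on $|L|$ vertices has a vertex of degree at most $(1-\tfrac{1}{t-1})|L|$, we get $|L|-k+1\le(1-\tfrac{1}{t-1})|L|$, whence $|L|\le(t-1)(k-1)$. Therefore $2e(B)=\sum_v\deg_B(v)\ge(k-1)(n-|L|)\ge(k-1)n-(k-1)^2(t-1)$, so $e(B)\ge\tfrac{k-1}{2}n-O_{t,k}(1)$.

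The heart of the argument is the matching lower bound $e(R)\ge(2t-4)n-O_{t,k}(1)$ on the red edges, and I expect this to be the main obstacle. It is genuinely a global estimate rather than a minimum-degree statement: in the extremal configuration—two vertex-disjoint red $K_{t-2}$'s that are pairwise red-non-adjacent, each red-complete to the remaining $n-2(t-2)$ vertices, together with a $(k-1)$-regular blue graph on those vertices—almost all vertices have red degree only $2(t-2)$, and $e(R)\approx(2t-4)n$ is attained precisely because the $2(t-2)$ hub vertices have red degree about $n$. Thus the common-red-$K_{t-2}$ consequence alone, which only yields $\deg_R(v)\ge t-2$, is off by a factor of $2$; recovering the correct constant requires the \emph{full} hypothesis that \emph{every} coloring of $G+uv$ fails, not merely the extension of the fixed one. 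The mechanism is a rerouting obstruction: if the red graph were too sparse—if too many vertices had red degree below $2(t-2)$, or if the red $K_{t-2}$'s covering the non-neighborhoods of low-red-degree vertices could be organized into too few cliques—then for a suitable non-edge $uv$ one could recolor boundedly many edges of $G+uv$ (switching a few red edges at $u$ to blue and compensating by recoloring blue edges red) so as to build a critical coloring of $G+uv$, contradicting $G+uv\rightarrow(K_t,K_{1,k})$. Carrying out this recoloring analysis to pin down the constant $2(t-2)$ and to confine all violations to a set whose size is bounded in terms of $t$ and $k$—combining $K_t$-free structure via Tur\'an/Ramsey-saturation estimates with the available recoloring freedom—is the delicate step.

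Combining the two bounds then yields
\[
e(G)=e(R)+e(B)\ge(2t-4)n+\tfrac{k-1}{2}n-O_{t,k}(1)=\Bigl(2t-4+\tfrac{k-1}{2}\Bigr)n-\ell(t,k),
\]
with all lower-order terms absorbed into $\ell(t,k)$. For the asymptotic sharpness when $t\in\{3,4,5\}$, one verifies that the configuration described above (which specializes to a red $K_{2,n-2}$ plus a $(k-1)$-regular blue graph when $t=3$) is $(K_t,K_{1,k})$-co-critical for all $n\ge(2t-2)k+1$ and has $\bigl(2t-4+\tfrac{k-1}{2}\bigr)n+O_{t,k}(1)$ edges; the restriction to $t\le5$ reflects that for larger $t$ the robustness needed for co-criticality of this particular construction can break down.
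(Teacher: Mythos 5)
This theorem is not proved in the paper at all: it is imported from Davenport, Song and Yang~\cite{DSY22}, so your proposal can only be measured against the known argument there and on its own terms. Your blue-edge estimate is fine and is essentially the standard one: for any non-edge $uv$, extending the fixed critical coloring by a blue $uv$ must create a blue $K_{1,k}$, so the set $L$ of vertices of blue degree at most $k-2$ is a clique of $G$, and your Tur\'an argument giving $|L|\le (t-1)(k-1)$ and hence $e(B)\ge \frac{k-1}{2}n-O_{t,k}(1)$ is sound.

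The genuine gap is that the red-edge bound $e(R)\ge (2t-4)n-O_{t,k}(1)$ is the entire content of the theorem, and you do not prove it: you correctly diagnose that the ``common red $K_{t-2}$'' consequence only yields red degree $t-2$, off by a factor of two, and then you describe a hoped-for ``rerouting obstruction'' without carrying out any part of it; as written this is a statement of intent, not an argument. Two concrete ideas are missing. First, one should not fix an arbitrary critical coloring but one maximizing $|E_r|$: then recoloring any blue edge red creates a red $K_t$, so together with the non-edge condition the red graph $G_r$ is genuinely $K_t$-saturated; with an arbitrary coloring, as in your setup, this fails and no saturation machinery is available. Second, the factor $2$ is recovered in \cite{DSY22} by proving, via a careful recoloring lemma, that all vertices outside a bounded exceptional set have red degree at least $2t-4$, and then invoking Day's theorem that a $K_t$-saturated graph with minimum degree $d$ has at least $dn-c(t,d)$ edges --- a published result, not something that follows from the local consequences you listed. (The present paper's own main proof, Theorem~\ref{ksqrt}, runs exactly this scheme with Ollmann's theorem on $C_4$-saturated graphs playing the role of Day's theorem.) Finally, the sharpness half is also unestablished: your ``two disjoint red $K_{t-2}$'s plus a $(k-1)$-regular blue graph'' has the right edge count, but co-criticality requires that \emph{every} coloring of $G+e$ fails for every non-edge $e$, which this bare configuration does not guarantee; the actual extremal graphs need rigid gadgets (complete bipartite blocks, prescribed matchings, apex vertices) of the kind built in Section~\ref{sec:C4K1k} of this paper and in \cite{DSY22}, together with a uniqueness-of-coloring analysis.
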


\begin{thm}[Davenport, Song and Yang~\cite{DSY22}]\label{t:threeclaw}
 Every $(K_3, K_{1,3})$-co-critical graph on $n\ge13$ vertices  has at least   $  3n-4$ edges. This bound is sharp for all $n\ge13$.
\end{thm}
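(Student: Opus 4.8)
The plan is to fix a critical coloring of $G$, with red subgraph $R$ (which is triangle-free, as there is no red $K_3$) and blue subgraph $B$ (which satisfies $\Delta(B)\le 2$, as there is no blue $K_{1,3}$), and then to extract local constraints by testing, for each non-edge $xy$ of $G$, the two extensions of this coloring to $G+xy$. Since $G+xy\to(K_3,K_{1,3})$, neither extension can be critical, and this yields two conditions: (a) coloring $xy$ red must create a red triangle, so $x$ and $y$ have a common red neighbor; and (b) coloring $xy$ blue must create a blue $K_{1,3}$, so at least one of $x,y$ has blue-degree exactly $2$ in $B$. I would prove the lower bound $e(G)\ge 3n-4$ by splitting $e(G)=e(R)+e(B)$ and bounding each part, and prove sharpness by a matching construction.

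The blue part is the easier half. From (b), the set $W$ of vertices of blue-degree at most $1$ is a clique of $G$, since any two such vertices would otherwise form a non-edge violating (b). Restricting the coloring to $W$, the blue edges inside the clique form a matching while the red edges form a triangle-free graph, so $\binom{|W|}{2}-\lfloor|W|/2\rfloor\le\lfloor|W|^2/4\rfloor$, which forces $|W|\le 4$. Hence all but at most four vertices have blue-degree exactly $2$, giving $e(B)\ge n-4$ at once, together with enough control of the extremal blue configuration to pin the blue contribution down exactly.

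The technical heart is the red bound $e(R)\ge 2n-O(1)$. Condition (a) says that every non-edge of $G$ closes a red triangle, so $R$ is $K_3$-saturated with respect to the non-edges of $G$; combined with the fact that almost every vertex has blue-degree $2$, so its red neighborhood must red-dominate all of its non-neighbors, this pushes the average red degree up to about $4$ rather than the value $2$ one would get from bare $K_3$-saturation. Concretely, I would bound $\sum_{w\in N_R(v)}\deg_R(w)$ from below for each $v$ using that $N_R(v)$ red-dominates the non-neighbors of $v$, and then argue that vertices of small red degree force only a bounded number of high red-degree hubs, whose contribution can be accounted for exactly. Since Theorem~\ref{ktstar} already supplies $e(G)\ge 3n-\ell(3,3)$ for all $n\ge 7$, the efficient route is to take that as a starting point and sharpen the additive constant to $4$ for $n\ge 13$ by a careful local analysis of the near-extremal configuration, rather than re-deriving the linear term.

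For sharpness I would exhibit, for every $n\ge 13$, a $(K_3,K_{1,3})$-co-critical graph with exactly $3n-4$ edges, taking the red graph to be a triangle-free, essentially $K_3$-saturated graph on about $2n$ edges (a $K_{2,n-2}$-type core with centers $a,b$ and leaf set $S$) and the blue graph to be almost $2$-regular, with a small rigid gadget on boundedly many vertices to handle the few vertices of blue-degree below $2$. Checking that the displayed coloring is critical is immediate; the main obstacle is verifying the hard direction, namely that $G+e$ admits \emph{no} critical coloring for every non-edge $e$. Conditions (a) and (b) are only necessary, so one must rule out every recoloring of $G+e$, not just the two extensions of the canonical one. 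The real danger is a global rearrangement: a trial run shows that if the blue $2$-factor is a single long cycle, then a long chord $v_iv_j$ can be absorbed by rotating red and blue along the arc between $v_i$ and $v_j$, so the naive construction is not co-critical. The construction must therefore be rigid enough — for instance by keeping the red core dense enough that $a$ and $b$ are forced red to almost all of $S$, which in turn forces the interior edges blue and makes any added chord overflow some blue-degree-$2$ vertex — that no such rearrangement survives. Making this rigidity airtight for all $n\ge 13$, simultaneously with the exact edge count $3n-4$ and the matching additive constant in the lower bound, is where the bulk of the work lies.
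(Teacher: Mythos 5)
First, a point of context: this paper does not prove Theorem~\ref{t:threeclaw} at all --- it quotes the result from Davenport, Song and Yang~\cite{DSY22} --- so there is no in-paper proof to compare against, and your proposal must stand on its own. Its opening is sound and matches the standard machinery (it is the $(K_3,K_{1,3})$-analogue of Lemma~\ref{lem:blue}): conditions (a) and (b) are valid necessary conditions, the set $W$ of vertices of blue-degree at most $1$ is a clique, Mantel's inequality inside that clique forces $|W|\le 4$, and hence $e(B)\ge n-4$. The genuine gap is that your split can never produce the constant $-4$. To reach $3n-4$ from $e(B)\ge n-4$ you would need $e(R)\ge 2n$, and this is false: your own sharpness candidate has a red graph of $K_{2,n-2}$-type with only $2n-4$ edges, so if that construction works (as you need it to), the red bound you aim for is unprovable. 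Bare saturation cannot rescue this, since the star $K_{1,n-1}$ is $K_3$-saturated and shows that $K_3$-saturation alone guarantees only $e(R)\ge n-1$. The proof therefore requires a \emph{joint} trade-off argument in which each blue-deficient vertex of $W$ is charged against extra red edges; no such argument appears --- ``pin the blue contribution down exactly'' and ``accounted for exactly'' are placeholders for it. The fallback via Theorem~\ref{ktstar} is no better: passing from the unspecified constant $\ell(3,3)$ to the exact constant $4$ is precisely the content of Theorem~\ref{t:threeclaw}, not a routine refinement. A further technical lapse: you never take the critical coloring with $|E_r|$ maximum, so your red graph is only saturated with respect to non-edges of $G$ (blue edges are non-edges of $G_r$ that need not close red triangles), which weakens the domination count you propose to run; the maximality choice is exactly what the present paper uses so that $G_r$ becomes genuinely $C_4$-saturated and Theorem~\ref{C4saturated} applies, and the analogous choice is needed here.

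The second gap is sharpness. You correctly observe that the hard direction is showing $G+e$ admits \emph{no} critical coloring whatsoever --- conditions (a) and (b) being merely necessary --- and you even identify a real failure mode of naive constructions (recoloring by rotation along a long blue cycle). But you give neither an explicit construction for every $n\ge 13$ nor the rigidity verification, conceding that this ``is where the bulk of the work lies.'' Since the exact additive constant and a verified extremal construction together constitute the theorem, what you have is a sensible plan with correct preliminary lemmas and the right asymptotic orientation, not a proof.
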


For  multicolor $k$, Ferrara, Kim and Yeager~\cite{Ferrara2014}     determined the minimum number of edges over all  $(H_1,\dots,H_k)$-co-critical graphs on $n$ vertices   when each $H_i$ is a matching.  Very recently, Chen, Miao, Zhang and the third author~\cite{P3} determined the minimum number of edges over all  $(H_1,\dots,H_k)$-co-critical graphs on $n$ vertices when each $H_i=P_3$.

In this paper, we  study the minimum number of possible edges over all $(C_4,K_{1,k})$-co-critical graphs on $n$ vertices. By a classic result of Parsons~\cite{parsons1975}, $r(C_4,K_{1,k})\le k +\lfloor \sqrt {k-1} \rfloor +2$ for all $k\ge2$.  Note that  every $(C_4,K_{1,k})$-co-critical graph has at least $r(C_4,K_{1,k})$ vertices. We prove  the following main results.  

\begin{thm}\label{C4K1k}
For all $k\ge 2$ and  $n\in\mathbb{N}$ with $ n \ge k +\lfloor \sqrt {k-1} \rfloor +2$, if $G$ is a $(C_4, K_{1,k})$-co-critical graph on $n$ vertices, then
\begin{align*}
 e(G) \ge \frac{(k+2)n}2-3- \frac{(k-1)(k+ \lfloor \sqrt {k-2}\rfloor)}2.
\end{align*}
Moreover, this linear bound is asymptotically best possible for all $k\ge3$ and $n\ge3k+4$.
\end{thm}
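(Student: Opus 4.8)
The plan is to fix a well-chosen critical coloring of $G$, extract from it a $C_4$-saturated red graph to which the result of Ollmann applies, and then control the blue edges through the degree constraint imposed by the absence of a blue $K_{1,k}$. First I would choose, among all critical colorings of $G$, one $c$ that \emph{maximizes} the number of red edges, and write $R$, $B$ for the red and blue subgraphs, so $e(G)=e(R)+e(B)$. Since $c$ has no red $C_4$, the graph $R$ is $C_4$-free, and I claim it is $C_4$-saturated. Let $xy$ be a non-edge of $R$, that is, a pair that is either a non-edge of $G$ or a blue edge of $G$. If $xy\notin E(G)$, then since $G+xy\rightarrow(C_4,K_{1,k})$, recoloring forces a monochromatic copy when $xy$ is red; as no blue edge is added this is a red $C_4$, necessarily through $xy$, so $R+xy$ contains a $C_4$. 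If $xy$ is a blue edge, then recoloring $xy$ red cannot create a blue $K_{1,k}$, so by maximality it must create a red $C_4$, again through $xy$. Hence $R$ is $C_4$-saturated on $n\ge k+\lfloor\sqrt{k-1}\rfloor+2\ge5$ vertices, and Ollmann's theorem gives $e(R)\ge\lfloor(3n-5)/2\rfloor\ge\tfrac{3n}{2}-3$.

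Next I would analyze the blue degrees. As $c$ has no blue $K_{1,k}$, every vertex has blue degree $d_B(v)\le k-1$; set $W:=\{v:d_B(v)=k-1\}$ and $U:=V(G)\less W$. For a non-edge $xy$ of $G$, coloring $xy$ blue cannot create a red $C_4$, so it must create a blue $K_{1,k}$ centered at $x$ or $y$, forcing $x\in W$ or $y\in W$; hence $U$ induces a clique $K_m$ in $G$, where $m:=|U|$. Writing the deficiency $D:=\sum_{u\in U}(k-1-d_B(u))\ge0$, we have $2e(B)=\sum_v d_B(v)=(k-1)n-D$, so
\[
e(G)=e(R)+e(B)\ge\tfrac{3n}{2}-3+\tfrac{(k-1)n-D}{2}=\tfrac{(k+2)n}{2}-3-\tfrac{D}{2}.
\]
It therefore remains to prove $D\le(k-1)(k+\lfloor\sqrt{k-2}\rfloor)$.

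To bound $D$ I would work inside the clique $U$. Each $u\in U$ has total blue degree at most $k-2$, so its red degree inside $U$ is at least $(m-1)-(k-2)=m-k+1$. Since $R[U]$ is $C_4$-free, counting paths of length two gives $m\binom{m-k+1}{2}\le\binom{m}{2}$, whence $(m-k+1)(m-k)\le m-1$ and thus $m\le k+\lfloor\sqrt{k-1}\rfloor$. Combined with $k-1-d_B(u)\le k-1$ this yields $D\le(k-1)(k+\lfloor\sqrt{k-1}\rfloor)$, which is the desired bound \emph{except} when $k-1$ is a perfect square, the unique case in which $\lfloor\sqrt{k-1}\rfloor=\lfloor\sqrt{k-2}\rfloor+1$. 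This boundary case is the main obstacle: I would show $m=k+\lfloor\sqrt{k-1}\rfloor$ cannot occur. There all inequalities are tight, forcing $R[U]$ to be a $C_4$-free $(t+1)$-regular graph on $t^2+t+1$ vertices ($t=\sqrt{k-1}$) in which every two vertices have a unique common neighbor—a projective-plane polarity configuration—which I would exclude using the $C_4$-freeness of $R$ on all of $V(G)$ together with the saturation links from $U$ to $W$. Alternatively, one can avoid the case analysis by sharpening the estimate to $D\le m(k-m)+2\,\ex(m,C_4)$ and verifying directly that this never exceeds $(k-1)(k+\lfloor\sqrt{k-2}\rfloor)$; either route completes the lower bound.

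For the asymptotic sharpness when $k\ge3$ and $n\ge3k+4$, I would exhibit explicit $(C_4,K_{1,k})$-co-critical graphs with $\tfrac{(k+2)n}{2}+O_k(1)$ edges: take a sparse Ollmann-extremal $C_4$-saturated graph as the red part on the bulk of the vertices, overlay a blue graph in which all but boundedly many vertices have blue degree exactly $k-1$, and attach a bounded-size clique gadget realizing the blue-deficient set $U$. The edge count then matches the claimed leading coefficient $\tfrac{k+2}{2}$. The genuine difficulty here is not the count but verifying co-criticality: one must check that $G$ admits the displayed critical coloring and that, for every non-edge $e$, every red/blue coloring of $G+e$ contains a red $C_4$ or a blue $K_{1,k}$. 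This is where the rigidity of the gadget enters and, as the abstract indicates for even $k$, where the several distinct critical colorings must be ruled out simultaneously.
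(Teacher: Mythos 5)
Your lower-bound half has the same skeleton as the paper's proof: among all critical colorings choose one maximizing the number of red edges, so that the red graph is $C_4$-saturated and Ollmann's theorem applies; observe that the vertices of blue degree at most $k-2$ form a clique $U$ (the paper's set $S$); and bound $|U|$, which controls the blue deficiency. The only real difference is how $|U|$ is bounded. The paper does it in one line: $G[U]$ is a complete graph whose red part is $C_4$-free and whose blue part is $K_{1,k-1}$-free, so $|U|\le r(C_4,K_{1,k-1})-1\le k+\lfloor\sqrt{k-2}\rfloor$ by Parsons' theorem. You instead reprove this by counting cherries, which only yields $|U|\le k+\lfloor\sqrt{k-1}\rfloor$ and forces you into the boundary case where $k-1=t^2$ is a perfect square. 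Your plan for excluding that case is misdirected: when $m=t^2+t+1$, tightness forces $R[U]$ to be $(t+1)$-regular with every two vertices having \emph{exactly} one common neighbour, i.e.\ a regular friendship graph, and this is impossible for $t\ge 2$ by the friendship theorem; no projective-plane polarity configuration (which is never regular) and no ``saturation links from $U$ to $W$'' are relevant. Note also that for $t=1$ (i.e.\ $k=2$) the triangle \emph{does} realize the configuration, so your target inequality $D\le(k-1)(k+\lfloor\sqrt{k-2}\rfloor)$ is actually false at $k=2$ (there $|U|=3$ can occur, as in the paper's Section 4) and the stated bound survives only by integrality; so your argument, with the friendship theorem inserted, proves the lower bound cleanly for $k\ge3$ but needs a separate word for $k=2$.

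The genuine gap is the second half of the statement, the asymptotic sharpness for $k\ge3$ and $n\ge3k+4$. In the paper this is Theorem~3.2 and constitutes the bulk of Section~3: an explicit family $G_{\varepsilon,\varepsilon^*}$ is built from $K_{k-1,k-1}$ (minus an edge when $k$ is even), a $k$-regular graph whose edge set decomposes into $k$ perfect matchings, and three to five additional vertices with carefully prescribed attachments; co-criticality is then verified by an analysis that pins down \emph{every} critical coloring of $G_{\varepsilon,\varepsilon^*}$ (unique up to symmetry when $k$ is odd, exactly three or six when $k$ is even), and this rigidity is precisely what guarantees that adding any missing edge destroys all critical colorings. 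Your proposal replaces this with a description---sparse Ollmann-extremal red part, a blue overlay that is $(k-1)$-regular off a bounded set, a bounded clique gadget---plus the remark that verifying co-criticality is the real difficulty. That remark is correct, and it is exactly what is missing: an arbitrary such overlay is in general \emph{not} co-critical, because $G+e$ may admit a critical coloring entirely different from the intended one, and ruling this out requires the classification of critical colorings that you allude to but never carry out. As written, the ``moreover'' clause of the theorem has no proof.
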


For $k=2$, we   obtain the sharp bound for the minimum number of edges of $(C_4, K_{1,2})$-co-critical graphs on $n\ge5 $ vertices.

\begin{restatable}{thm}{threeclaw}\label{C4P31}
Every $(C_4,K_{1,2})$-co-critical graph on $n\ge 5$ vertices has at least $2n-3$ edges. This bound is sharp for all $n\ge 5$.
\end{restatable}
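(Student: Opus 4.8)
The plan is to translate the co-criticality of $G$ into local structure on a fixed critical coloring and then to count edges via Ollmann's saturation bound. Since $K_{1,2}=P_3$, a critical coloring of $G$ is nothing but a partition $E(G)=R\cup M$ in which the red graph $R$ is $C_4$-free and the blue graph $M$ is a matching (a blue $K_{1,2}$ is just two blue edges sharing a vertex). I fix such a coloring and record that $e(G)=e(R)+|M|$. Because $G$ is co-critical, for every non-edge $uv$ of $G$ the coloring $G+uv\rightarrow(C_4,K_{1,2})$, and extending our coloring to $uv$ in both ways must fail. Coloring $uv$ blue leaves the red graph unchanged (so no red $C_4$), hence the failure is a blue $P_3$: at least one of $u,v$ is already matched by $M$. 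Coloring $uv$ red leaves $M$ a matching (so no blue $P_3$), hence $R+uv$ must contain a $C_4$.

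From these two facts I would derive the two engines of the proof. Let $U$ be the set of $M$-unmatched vertices. Any two vertices of $U$ must be adjacent in $G$ (otherwise the first fact fails for their non-edge), so $U$ is a clique; moreover every edge inside $U$ is red, and since $R$ is $C_4$-free it contains no red $K_4$, forcing $|U|\le 3$. Consequently $|M|=(n-|U|)/2$ with $|U|\equiv n\pmod 2$. For the second engine I choose, among all critical colorings, one maximizing $e(R)$. If some non-edge of $R$ could be added without creating a $C_4$, then either it is a non-edge of $G$ (contradicting the second fact above) or it is a blue edge, which could then be recolored red to increase $e(R)$ (contradicting maximality). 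Hence this $R$ is $C_4$-\emph{saturated}, and Ollmann's theorem gives $e(R)\ge\lfloor(3n-5)/2\rfloor$ for $n\ge 5$. Combining,
\[
e(G)=e(R)+\frac{n-|U|}{2}\ \ge\ \left\lfloor\frac{3n-5}{2}\right\rfloor+\frac{n-|U|}{2}.
\]
When $|U|\le 1$ (that is, $|U|=0$ for $n$ even and $|U|=1$ for $n$ odd) this already yields exactly $e(G)\ge 2n-3$, so the bound is in hand in the "good" cases.

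The \textbf{main obstacle} is the two remaining cases $|U|=2$ ($n$ even) and $|U|=3$ ($n$ odd), where the display above only gives $2n-4$: here I must gain one extra red edge, i.e. show $e(R)\ge\lfloor(3n-5)/2\rfloor+1$. The point is that in these cases the all-red clique $U$ of size $2$ or $3$ is quite constrained inside the $C_4$-free graph $R$ — its vertices have all their $G$-edges red, and by $C_4$-freeness any two of them have a unique common neighbor (each other), so their external neighborhoods are pairwise disjoint. I plan to use this rigidity to show that such an $R$ cannot be a minimum $C_4$-saturated graph: either by a direct degree/neighborhood count around $U$ against the saturation witnesses supplied by the second fact, or by appealing to the structure of the extremal (minimum-edge) $C_4$-saturated graphs to exclude an all-red clique of this type, in either case producing the missing edge. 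This is the delicate, case-heavy technical heart of the lower bound.

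For sharpness I would build, for every $n\ge 5$, a $(C_4,K_{1,2})$-co-critical graph with exactly $2n-3$ edges by reversing the counting above in the good cases: take an Ollmann-extremal $C_4$-saturated graph $R_0$ on $n$ vertices with $\lfloor(3n-5)/2\rfloor$ edges as the red graph, and add as blue a matching $M_0$ chosen among the non-edges of $R_0$ that is perfect when $n$ is even and near-perfect (one vertex left out) when $n$ is odd. Then $G:=R_0\cup M_0$ has $\lfloor(3n-5)/2\rfloor+\lfloor n/2\rfloor=2n-3$ edges, and the coloring $(R_0,M_0)$ is critical, so $G\not\rightarrow(C_4,K_{1,2})$. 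The remaining verification is that $G$ is co-critical: for every non-edge $e$ one must check that $G+e\rightarrow(C_4,K_{1,2})$, i.e. that \emph{every} red/blue coloring of $G+e$ produces a red $C_4$ or a blue $P_3$. The crux of this check is that the saturation of $R_0$ forces a red $C_4$ through $e$ unless enough incident edges are recolored blue, while the near-perfect matching leaves no room to do so without creating a blue $P_3$; I expect this to require choosing $R_0$ (and aligning $M_0$ with its saturation structure) carefully so that the added edge cannot be "absorbed" by any recoloring.
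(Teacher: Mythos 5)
Your skeleton is exactly the paper's: fix a critical coloring maximizing the red class, so the red graph $G_r$ is $C_4$-saturated and the blue graph is a matching; the unmatched set $S$ is an all-red clique of size at most $3$; then Ollmann's bound plus the matching count gives $2n-3$ except in the two hard cases $|S|=2$ ($n$ even) and $|S|=3$ ($n$ odd). But precisely there your proposal stalls, and neither of your two suggested ways out can close the gap as stated. A purely structural exclusion of "an all-red clique of this type" from minimum $C_4$-saturated graphs is impossible: each of Ollmann's extremal graphs (Figure~\ref{C4}) contains a central triangle, and nothing prevents a blue matching in the complement from missing exactly its vertices, so "Ollmann-extremal red graph plus matching avoiding $S$" is a perfectly consistent colored graph. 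Likewise, your two local facts (that neither one-edge extension of the \emph{fixed} coloring survives) are satisfied by such configurations, so no degree or neighborhood count based on them alone can manufacture the missing edge. What actually kills the hard cases is the full strength of co-criticality — that $G+e$ admits \emph{no} critical coloring at all, not just no extension of $\tau$. Concretely, the paper notes that $e(G)\le 2n-4$ forces $e(G_r)=\lfloor(3n-5)/2\rfloor$ exactly, so by the uniqueness part of Theorem~\ref{C4saturated} $G_r$ is one of three explicit graphs; in each, it locates a degree-one vertex $v$ and a vertex $v_i\in S$ with $vv_i\notin E(G)$, colors $vv_i$ red, and simultaneously \emph{recolors an edge of the clique $S$ blue} (legal since the vertices of $S$ are blue-unmatched), producing a critical coloring of $G+vv_i$ and a contradiction. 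This recolor-and-add move, played against the explicit extremal structure, is the idea your plan is missing.

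The sharpness half has the same character of gap. Your construction (Ollmann-extremal red graph $R_0$ plus a perfect or near-perfect blue matching $M_0$ in its complement) is the right shape — the paper's $G_\varepsilon$ (Figure~\ref{C4P3}) is of this form, a vertex dominating a long cycle plus a small gadget — but the entire content is the verification you defer. Co-criticality requires that \emph{every} coloring of $G+e$ contain a red $C_4$ or a blue $K_{1,2}$; your heuristic only addresses colorings close to $(R_0,M_0)$, whereas a critical coloring of $G+e$ could be unrelated to it. The paper resolves this by proving that its specific $G_\varepsilon$ has a \emph{unique} critical coloring $\sigma$ up to symmetry (using Lemma~\ref{lem:blue} to force the colors of the gadget edges and then of the cycle); then any critical coloring of $G_\varepsilon+e$ would restrict to $\sigma$ on $G_\varepsilon$, and one checks $\sigma$ extends to neither color of $e$. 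Some such classification of all critical colorings is unavoidable, and it is also where your "careful choice" of $R_0$ and $M_0$ actually happens: the paper's own hard-case analysis shows that an extremal red graph plus a matching is in general \emph{not} co-critical (that is exactly what is refuted when the matching misses two or three vertices), so co-criticality of your $G=R_0\cup M_0$ cannot be taken on faith and is not established by the proposal.
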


Our proofs rely heavily on  a  result of Ollmann~\cite{Ollmann} on $C_4$-saturated graphs (see Theorem~\ref{C4saturated}).  We believe our  method  can be used to determine  the minimum number of edges of $(C_5, K_{1,k})$-co-critical graphs on $n$ vertices. 
The remainder of this paper is organized as follows.  We first establish some important structural properties of $(C_4, K_{1,k})$-co-critical graphs  in Section~\ref{sec:properties}. We then prove Theorem~\ref{C4K1k} in Section~\ref{sec:C4K1k} and Theorem~\ref{C4P31} in Section~\ref{sec:C4P3}.

\section{Structural properties of $(C_4, K_{1,k})$-co-critical graphs}\label{sec:properties}
 We need to  introduce more notation. Given a graph $H$, a graph $G$ is \emph{$H$-free} if $G$ does not contain $H$ as a subgraph;  and $G$ is \emph{$H$-saturated} if $G$ is $H$-free but $G+e$ is not $H$-free for every $e\in \overline{G}$.
For a $ (C_4, K_{1,k})$-co-critical graph $G$,  let   $\tau : E(G) \to \{\text{red, blue} \}$  be a  $2$-coloring of $E(G)$  with color classes  $E_r$ and $E_b$.  We use $G_{r}$ and $G_{b}$ to denote the spanning subgraphs of $G$  with edge sets  $E_r$ and $E_b$, respectively.
We define  $\tau$ to be a  \emph{critical coloring} of $G$ if $G$ has neither  a red $C_4$ nor a blue $ K_{1,k}$ under $\tau$, that is,  if  $G_r$ is $C_4$-free and $G_b$ is $K_{1,k}$-free.  For each vertex $v$ in $G$,   we use $d_r(v)$ and $N_r(v)$ to denote the degree and neighborhood of $v$ in $G_r$, respectively. Similarly, we define $d_b(v)$ and $N_b(v)$ to be the degree and neighborhood of $v$ in $G_b$, respectively.    

Given  a $ (C_4, K_{1,k})$-co-critical graph $G$, it is worth noting  that   $G$ admits at least one critical coloring but,  for any edge $e\in E(\overline{G})$,
$G+e$ admits  no critical coloring.  In order to estimate $e(G)$, we let $\tau : E(G) \to \{\text{red, blue} \}$  be a  critical coloring of $G$ such that $|E_r|$ is maximum among all critical colorings of $G$. Then $G_r$ is  $C_4$-free but, $G_r+e$ has a copy of $C_4$ for each $e\in E(\overline{G})$, that is, $G_r$ is $C_4$-saturated.
Our proofs rely heavily on  a  result of Ollmann~\cite{Ollmann} on $C_4$-saturated graphs.

\begin{thm}[Ollmann~\cite{Ollmann}]\label{C4saturated}
	Every $C_4$-saturated graph   on $n\ge5$ vertices has at least $\lfloor(3n-5)/2\rfloor$ edges. This bound is sharp for all $n\ge 5$. Moreover, every  $C_4$-saturated  with $n\ge5$ vertices and $\lfloor(3n-5)/2\rfloor$  edges  is isomorphic to the graph in  Figure~\ref{C4}(a)  if $n$ is even, and  is isomorphic to one of the graphs in  Figure~\ref{C4}(b,c)   if $n$ is odd.
\end{thm}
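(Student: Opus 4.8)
The plan is to first translate the two defining conditions of $C_4$-saturation into workable combinatorial statements, then establish the linear lower bound by a discharging argument, and finally track the cases of equality to obtain the characterization of the extremal graphs. Writing $G$ for a $C_4$-saturated graph on $n\ge5$ vertices, being $C_4$-free means precisely that \emph{every two vertices of $G$ have at most one common neighbor}, while saturation means that \emph{every two non-adjacent vertices $u,v$ are joined by a path of length exactly $3$}. Indeed, since $G$ is $C_4$-free, any new $C_4$ in $G+uv$ must use the edge $uv$, and such a $C_4$ is exactly a path $u-q-p-v$ closed up by the new edge. In particular $G$ is connected and has diameter at most $3$, and since $K_n\supseteq C_4$ for $n\ge4$ the graph $G$ is non-complete, so non-adjacent pairs (hence these mandatory length-$3$ paths) genuinely exist.

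For the lower bound I would assign to each vertex $v$ the charge $d(v)-3$, so that the total charge equals $2e(G)-3n$; the goal becomes showing this total is at least $-6$ when $n$ is even and at least $-5$ when $n$ is odd, which is exactly
\[
2e(G)\ge 2\left\lfloor \frac{3n-5}{2}\right\rfloor.
\]
The only debtors are the vertices of degree $1$ and $2$, and these are precisely where the extremal examples are tight, so the heart of the argument is to redistribute charge to them from higher-degree vertices. The two reformulated conditions give strong local control. For a degree-$1$ vertex $v$ with neighbor $w$, every other vertex must reach $v$ along a length-$3$ path of the form $\cdot-\cdot-w-v$, which forces $w$ to have large degree and shows two degree-$1$ vertices cannot share a neighbor. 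For a degree-$2$ vertex $v$ with neighbors $a,b$, one splits according to whether $a,b$ are adjacent, and uses that every non-neighbor of $v$ must reach $\{a,b\}$ by a path of length $2$, which again constrains $N(a)$ and $N(b)$. The task is then to design discharging rules moving the correct amount across these configurations while respecting the ``at most one common neighbor'' constraint to prevent over-drawing, thereby yielding $2e(G)-3n\ge-6$ (even $n$) or $-5$ (odd $n$).

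The hard part will be the interaction among the low-degree vertices: two degree-$2$ vertices may be adjacent, may share a common neighbor, or may be linked only through the forced length-$3$ paths, and each case constrains the surrounding structure differently, so a somewhat intricate case analysis is unavoidable in making the discharging simultaneously valid and tight. The sharpness of the bound is the easy direction: one simply verifies that the graphs in Figure~\ref{C4} are $C_4$-saturated and attain $\lfloor(3n-5)/2\rfloor$ edges. Finally, for the ``moreover'' statement I would run the discharging in the equality case, forcing every inequality used above to be tight; this rigidifies the degrees (essentially all vertices of degree $2$ or $3$ with a bounded number of exceptions) and, combined with the $C_4$-free and length-$3$-path conditions, pins the global structure down to the graph of Figure~\ref{C4}(a) when $n$ is even and to one of those of Figure~\ref{C4}(b,c) when $n$ is odd. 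Controlling this equality analysis, rather than the bare inequality, is where I expect the main difficulty to lie.
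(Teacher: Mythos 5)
This theorem is not proved in the paper at all: it is Ollmann's result, quoted verbatim from the cited reference, and the paper only \emph{uses} it (via the fact that a critical coloring maximizing $|E_r|$ makes $G_r$ a $C_4$-saturated graph). So your attempt must stand on its own as a complete proof, and it does not. What you have written is a proof \emph{plan}: the two reformulations (being $C_4$-free means any two vertices have at most one common neighbor; saturation means any two non-adjacent vertices are joined by a path of length exactly $3$) are correct and standard, and the charge $d(v)-3$ with target $2e(G)-3n\ge -6$ is a reasonable framework (note that for odd $n$ the bound $-5$ follows from $-6$ automatically by parity, since $2e(G)-3n\equiv n\pmod 2$). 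But the entire mathematical content is deferred: you never state the discharging rules, never verify that high-degree vertices can afford the charge they send (which is exactly where the ``at most one common neighbor'' condition must be exploited quantitatively), and never carry out the case analysis for interacting degree-$1$ and degree-$2$ vertices that you yourself identify as the crux. Your own phrasing --- ``the task is then to design discharging rules,'' ``a somewhat intricate case analysis is unavoidable,'' ``this is where I expect the main difficulty to lie'' --- concedes that the argument is not done.

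The gap is not a technicality. Ollmann's original proof of the bound alone is a lengthy structural case analysis (a later, still nontrivial, shorter proof is due to Tuza), and the ``moreover'' part --- that equality forces exactly the graphs of Figure~\ref{C4}(a) for even $n$ and one of Figure~\ref{C4}(b,c) for odd $n$ --- is a separate, substantial rigidity argument, not a routine ``make every inequality tight'' pass: one must show that near-extremal local configurations can be glued together in essentially only one (resp.\ two) global ways. As it stands, your submission establishes only the easy observations (connectivity, diameter at most $3$, no two degree-$1$ vertices sharing a neighbor, sharpness of the examples) and leaves both the lower bound and the characterization unproven.
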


\begin{figure}[htbp]
	\centering
	\includegraphics[scale=0.5]{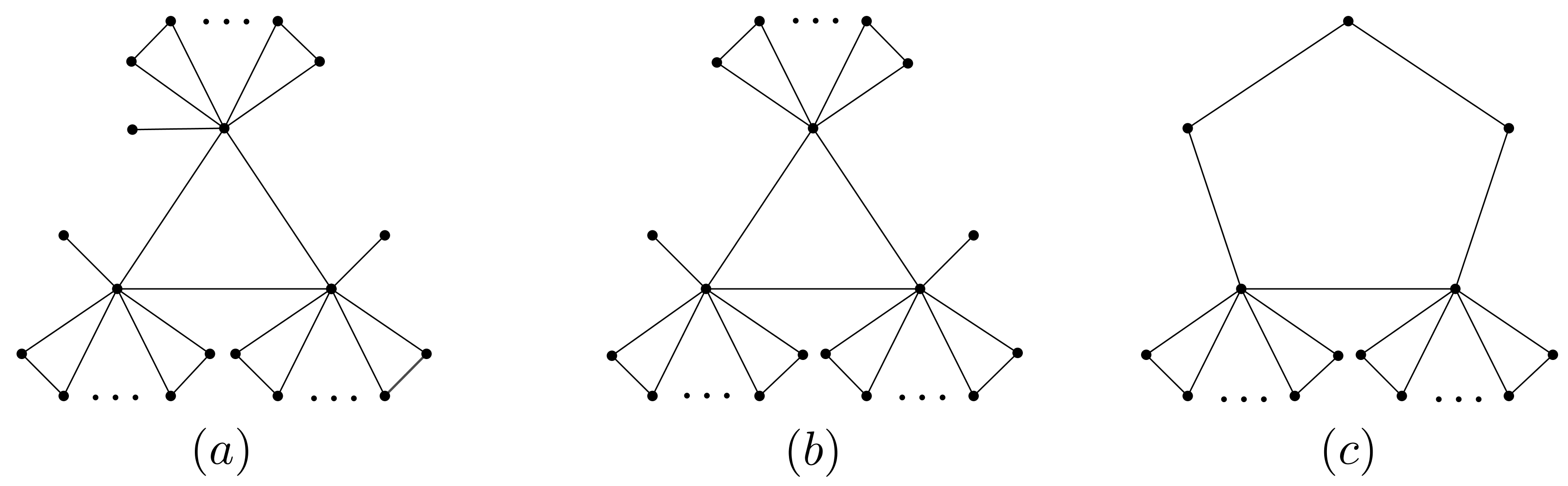}
	\caption{$C_4$-saturated graphs with $n\ge5$ vertices and $\lfloor(3n-5)/2\rfloor$ edges.}
	\label{C4}
\end{figure}

 We are now ready to prove some structural properties of $(C_4,K_{1,k})$-co-critical graphs.

\begin{lem}\label{lem:blue}
For all   $k\ge 2$, let $G$ be a $(C_4,K_{1,k})$-co-critical graph on $n\ge k +\lfloor \sqrt {k-1} \rfloor +2$ vertices. Let $\tau : E(G) \to\{red, blue\}$ be a critical coloring of $G$ with color classes $E_r$ and $E_b$. Let $S\subseteq V(G)$ be such that each vertex of $S$ has degree at most $k-2$ in $G_b$. Let $uv\in E(G)$. Then the following hold.

\begin{enumerate}[(i)]

\item  $S$ is a clique in $G$, that is, $G[S]=K_{|S|}$. Moreover, $\alpha(G_b[S])\le 3$ and $|S|\le k+\lfloor \sqrt {k-2} \rfloor$.

\item  If $|N(u)\cap N(v)|\ge 2k-2$, then $uv\in E_r$ and $|N_r(u)\cap N_r(v)|\le 1$.

\item  $|N(u)\cap N(v)|\le 2k-1$. Moreover, if $|N(u)\cap N(v)|=2k-1$, then $|N_r(u)\cap N_r(v)|=1$,   $|N_b(u)\cap (N(u)\cap N(v))|=|N_b(v)\cap (N(u)\cap N(v))|=k-1$, and   $N_b(u) \cap N_b(v)=\emptyset$.

 \end{enumerate}
\end{lem}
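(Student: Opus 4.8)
The plan is to derive all three parts from two elementary principles: the $C_4$-freeness of $G_r$ (so any two vertices share at most one red neighbour) together with the bound $d_b(w)\le k-1$ forced by $G_b$ being $K_{1,k}$-free, plus the co-criticality of $G$ for the clique statement. For part (i), I would first show $S$ is a clique. Suppose $x,y\in S$ are non-adjacent. Since $G$ is co-critical, $G+xy$ admits no critical colouring; yet I can extend $\tau$ to $G+xy$ by colouring $xy$ blue. This creates no red $C_4$ (no red edge is added) and no blue $K_{1,k}$, because the only vertices whose blue degree changes are $x$ and $y$, and each rises from at most $k-2$ to at most $k-1<k$. This contradiction gives $G[S]=K_{|S|}$. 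Next, $\alpha(G_b[S])\le 3$ is immediate: an independent set of $G_b[S]$ of size $m$ consists of $m$ vertices of the clique $S$ that are pairwise joined by red edges, i.e.\ a red $K_m$; since $K_4\supseteq C_4$ and $G_r$ is $C_4$-free, no red $K_4$ exists, so $m\le 3$.

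The size bound $|S|\le k+\lfloor\sqrt{k-2}\rfloor$ is the heart of the lemma, and I would obtain it by a cherry count in the red graph induced on $S$. Writing $s=|S|$ and $D=s-k+1$, each vertex of the clique $S$ has blue degree at most $k-2$, hence red degree at least $D$ inside $S$. Counting red paths of length two with both ends in $S$ and using that each pair of vertices has at most one common red neighbour gives $s\binom{D}{2}\le\sum_{w\in S}\binom{d_r^S(w)}{2}\le\binom{s}{2}$, whence $D(D-1)\le s-1$. Setting $s=k+j$ this simplifies to $j^2\le k-1$, i.e.\ $j\le\lfloor\sqrt{k-1}\rfloor$. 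This is one short of the claimed bound, and closing the gap is where the real difficulty lies: the floors $\lfloor\sqrt{k-1}\rfloor$ and $\lfloor\sqrt{k-2}\rfloor$ differ only when $k-1$ is a perfect square $j^2$, and exactly in that case the displayed inequalities are forced to hold with equality.

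Equality means $G_r[S]$ is $D$-regular and every two vertices of $S$ have exactly one common red neighbour. By the classical Friendship Theorem such a graph must be a windmill, and a windmill is $D$-regular only when it is $K_3$; for $j\ge 2$ this is impossible, which rules out $s=k+j$ and upgrades the bound to $j^2\le k-2$. I expect this equality/boundary analysis (together with checking the small cases $j\le 1$, where the Friendship Theorem yields no contradiction and a more direct argument is needed) to be the main obstacle.

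Parts (ii) and (iii) follow from the same common-neighbour bookkeeping applied to an edge $uv\in E(G)$. Put $W=N(u)\cap N(v)$. At most one $w\in W$ has both $wu,wv$ red, since otherwise $u,v$ would have two common red neighbours and $G_r$ would contain a $C_4$; this already gives $|N_r(u)\cap N_r(v)|\le1$. Every other $w\in W$ contributes a blue edge at $u$ or at $v$, so $|W|\le 1+|N_b(u)\cap W|+|N_b(v)\cap W|$. If $uv$ is blue then $uv$ itself consumes one unit of each blue budget, so $|N_b(u)\cap W|,|N_b(v)\cap W|\le k-2$ and $|W|\le 2k-3$; contraposing yields part (ii), namely $|N(u)\cap N(v)|\ge 2k-2$ forces $uv\in E_r$. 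If $uv$ is red the budgets are only $k-1$ each, giving $|W|\le 2k-1$, which is part (iii). For the equality case $|W|=2k-1$ every inequality must be tight: there is exactly one common red neighbour, $|N_b(u)\cap W|=|N_b(v)\cap W|=k-1$, and no $w\in W$ is blue to both $u$ and $v$; since $N_b(u)\cap N_b(v)\subseteq W$, the last statement gives $N_b(u)\cap N_b(v)=\emptyset$, completing (iii).
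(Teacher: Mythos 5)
Your parts (ii) and (iii), and the clique and $\alpha(G_b[S])\le 3$ portions of (i), are correct and essentially identical to the paper's argument: $C_4$-freeness of $G_r$ caps the common red neighbours at one, and the blue budgets $d_b\le k-1$ (dropping to $k-2$ inside $N(u)\cap N(v)$ when $uv$ itself is blue) give the thresholds $2k-3$ versus $2k-1$, with the tightness analysis at $2k-1$ forcing the partition of $N(u)\cap N(v)$ and hence $N_b(u)\cap N_b(v)=\emptyset$. Where you genuinely diverge is the bound $|S|\le k+\lfloor\sqrt{k-2}\rfloor$. The paper gets this in one line: the restriction of $\tau$ to the clique $G[S]$ has no red $C_4$ and no blue $K_{1,k-1}$, so $|S|\le r(C_4,K_{1,k-1})-1$, and Parsons' theorem (applied with parameter $k-1$) gives $r(C_4,K_{1,k-1})-1\le k+\lfloor\sqrt{k-2}\rfloor$. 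You instead re-derive the Ramsey-type bound from scratch: the cherry count $s\binom{D}{2}\le\binom{s}{2}$ yields $j^2\le k-1$, and when $k-1=j^2$ the forced equality configuration (a $(j+1)$-regular graph on $S$ in which every two vertices have exactly one common red neighbour) is eliminated by the Friendship Theorem for $j\ge 2$, since a windmill is regular only when it is $K_3$. This is correct, and in effect you have reconstructed the proof behind Parsons' bound; your route buys self-containedness at the cost of length, while the paper's citation buys brevity but conceals the identical counting.

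Concerning the case $j\le 1$ that you flag as needing ``a more direct argument'': this is exactly $k=2$ (where $k-1=1=j^2$), and there no such argument can exist within this framework, because a red triangle on three blue-isolated vertices satisfies every local constraint ($C_4$-free red graph, blue degrees $0\le k-2$), so the bound $|S|\le k+\lfloor\sqrt{k-2}\rfloor=2$ cannot be extracted from the hypotheses you (or the paper) actually use. But this is not a deficiency of your proof relative to the paper's: Parsons' inequality $r(C_4,K_{1,m})\le m+\lfloor\sqrt{m-1}\rfloor+2$ requires $m\ge 2$, and since $r(C_4,K_{1,1})=4$, the paper's chain of inequalities also yields only $|S|\le 3$ when $k=2$. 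Tellingly, in its sole application of the lemma with $k=2$ (Section 4), the paper invokes only $|S|\le 3$, which follows from $\alpha(G_b[S])\le 3$ because $G_b[S]$ is edgeless there. So your argument proves the size bound exactly where the paper's does, namely for $k\ge 3$, and both proofs silently fall back to $|S|\le 3$ at $k=2$; you were right to suspect the boundary case, but wrong to assume the paper had a trick you were missing.
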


\begin{proof} To prove (i), suppose there exist $x, y\in S$ such that $xy\not\in E(G)$. Note that $d_b(x)\le k-2$ and $d_b(y)\le k-2$. But then we obtain a critical coloring of $G+xy$ from $\tau$ by coloring the edge $xy$ blue, contrary to the fact that $G$ is $(C_4,K_{1,k})$-co-critical. Thus  $S$ is a clique in $G$. Since $G_r$ is $C_4$-free, we see that $\alpha(G_b[S])\le 3$.  By the choice of $S$, $G_b[S]$ is $K_{1,k-1}$-free, and so $|S|\le r(C_4, K_{1, k-1})-1\le k+\lfloor \sqrt {k-2} \rfloor$, as desired.\medskip

To prove (ii), assume  $uv$ belongs to $2k-2$ triangles in $G$. Since $G_r$ is $C_4$-free, we see that $|N_r(u)\cap N_r(v)|\le 1$. Suppose $xy\in E_b$.  Then $d_b(u)+d_b(v)\ge (2k-2-1)+2$. It follows that $d_b(u)\ge k$ or $d_b(v)\ge k$. In either case, $G_b$ contains a copy of $K_{1,k}$, a contradiction. Thus $uv\in E_r$. \medskip

It remains to prove (iii). Suppose    $uv$ belongs to $2k-1$ triangles in $G$. By (ii), $uv\in E_r$ and $|N_r(u)\cap N_r(v)|\le 1$. Then $d_b(u)+d_b(v)\ge (2k-1)-1=2k-2$.  Since $G_b$ is $K_{1,k}$-free,  we see that  $uv$ belongs to exactly $2k-1$ triangles,  $|N_r(u)\cap N_r(v)|=1$,   $|N_b(u)\cap (N(u)\cap N(v))|=|N_b(v)\cap (N(u)\cap N(v))|=k-1$, and   $N_b(u) \cap N_b(v)=\emptyset$.  
\end{proof}

\section{Proof of Theorem~\ref{C4K1k}}\label{sec:C4K1k}

We are now ready to prove Theorem~\ref{C4K1k} which follows directly from Theorem~\ref{ksqrt} and Theorem~\ref{kngele}.

\begin{thm}\label{ksqrt}
For all $k\ge 2$ and $ n \ge k +\lfloor \sqrt {k-1} \rfloor +2$, if $G$ is a $(C_4,K_{1,k})$-co-critical graph on $n$ vertices, then
\begin{align*}
e(G)\ge\frac{(k+2)n}{2}-3-\frac{(k-1)(k+ \lfloor \sqrt {k-2}\rfloor )}{2}.
\end{align*}

\end{thm}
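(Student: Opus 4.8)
The plan is to estimate the red and blue edges separately and add them, exploiting the fact (established in the discussion preceding Theorem~\ref{C4saturated}) that if we fix a critical coloring $\tau$ of $G$ with $|E_r|$ maximum, then the red graph $G_r$ is $C_4$-saturated while the blue graph $G_b$ is $K_{1,k}$-free. Thus the two genuinely substantive inputs are already at hand: Ollmann's extremal theorem (Theorem~\ref{C4saturated}) controls $e(G_r)$ from below, and the structural Lemma~\ref{lem:blue}(i), which forces the set of low-blue-degree vertices to be a small clique, lets me bound $e(G_b)$ from below. The remainder is a short counting argument.

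First I would invoke Theorem~\ref{C4saturated}. Since $n \ge k + \lfloor\sqrt{k-1}\rfloor + 2 \ge 5$ and $G_r$ is $C_4$-saturated on $n$ vertices,
\[
e(G_r) \ge \left\lfloor \frac{3n-5}{2} \right\rfloor \ge \frac{3n}{2} - 3.
\]
Next I would control the blue edges. Set $S := \{\, v \in V(G) : d_b(v) \le k-2 \,\}$. Because $G_b$ is $K_{1,k}$-free we have $d_b(v) \le k-1$ for every vertex $v$, so each $v \notin S$ satisfies $d_b(v) = k-1$; and Lemma~\ref{lem:blue}(i) applied to $S$ gives $|S| \le k + \lfloor\sqrt{k-2}\rfloor$. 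Hence
\[
2e(G_b) = \sum_{v \in V(G)} d_b(v) \ge (n - |S|)(k-1) \ge (k-1)\bigl(n - k - \lfloor\sqrt{k-2}\rfloor\bigr),
\]
so that $e(G_b) \ge \tfrac{(k-1)n}{2} - \tfrac{(k-1)(k+\lfloor\sqrt{k-2}\rfloor)}{2}$. Adding the two estimates yields
\[
e(G) = e(G_r) + e(G_b) \ge \frac{3n}{2} - 3 + \frac{(k-1)n}{2} - \frac{(k-1)(k+\lfloor\sqrt{k-2}\rfloor)}{2} = \frac{(k+2)n}{2} - 3 - \frac{(k-1)(k+\lfloor\sqrt{k-2}\rfloor)}{2},
\]
which is exactly the claimed bound.

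Since the heavy lifting is front-loaded into Ollmann's theorem and Lemma~\ref{lem:blue}(i), there is no real combinatorial obstacle left in this argument; the one point I would take care to state explicitly is the justification that $G_r$ is \emph{fully} $C_4$-saturated, as both halves of that property are needed to apply Theorem~\ref{C4saturated}. Co-criticality of $G$ handles the non-edges of $G$ (adding any such edge forces a red $C_4$), but it is the maximality of $|E_r|$ that handles the blue edges: if recoloring some blue edge red created no red $C_4$, the result would be a critical coloring with a strictly larger red class, contradicting the choice of $\tau$. I would record this observation before the counting so that the invocation of Ollmann's bound for $G_r$ is unambiguous.
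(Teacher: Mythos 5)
Your proposal is correct and follows essentially the same route as the paper: fix a critical coloring maximizing $|E_r|$ so that $G_r$ is $C_4$-saturated and $G_b$ is $K_{1,k}$-free, lower-bound $e(G_r)$ by Ollmann's theorem and $e(G_b)$ by the degree count outside the set $S$ of vertices of blue degree at most $k-2$ (which Lemma~\ref{lem:blue}(i) bounds by $k+\lfloor\sqrt{k-2}\rfloor$), and add. Your closing remark making explicit why maximality of $|E_r|$ (for blue edges) together with co-criticality (for non-edges) yields full $C_4$-saturation of $G_r$ is exactly the justification the paper records before stating Theorem~\ref{C4saturated}.
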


\begin{proof} Let $G$ be a $(C_4,K_{1,k})$-co-critical graph on $n\ge k +\lfloor \sqrt {k-1} \rfloor +2$ vertices. Then $G$ admits a critical coloring. Among all critical colorings of $G$, let $\tau:E(G)\to$ \{red, blue\} be a critical coloring of $G$ with color classes $E_r$ and $E_b$ such that $|E_r|$ is maximum. By the choice of $\tau$, $G_r$ is $C_4$-saturated and $G_b$ is $K_{1,k}$-free. Let $S:=\{v\in V(G)\mid d_b(v)\le k-2\}$. By Lemma~\ref{lem:blue}(i), $|S| \le k+\lfloor \sqrt {k-2} \rfloor $, and so
\begin{align*}
e(G_b) &\ge \left\lceil\frac{(k-1)(n-|S|)}{2}\right\rceil\\
&\ge \left\lceil\frac{(k-1)(n-k-\lfloor \sqrt {k-2} \rfloor)}{2}\right\rceil\\
&= \left\lceil\frac{(k-1)n}{2}-\frac{(k-1)(k+\lfloor \sqrt {k-2} \rfloor)}{2}\right\rceil.
\end{align*}
  By Theorem~\ref{C4saturated}, $e(G_r)\ge\lfloor(3n-5)/2\rfloor $. Consequently,

\begin{align*}
e(G)=e(G_r)+e(G_b)\ge\frac{(k+2)n}{2}-3-\frac{(k-1)(k+\lfloor \sqrt {k-2} \rfloor)}{2}, \end{align*}
as desired. This completes the proof of Theorem~\ref{ksqrt}.
\end{proof} \medskip

We next show that  the bound in Theorem~\ref{C4K1k}  is  asymptotically best possible  for all     $k \ge 3$ and $n\ge 3k + 4$.   \medskip

\begin{thm}\label{kngele}
For all $k\ge 3$ and $ n \ge 3k+3+\eps(2\eps^*-1)  $,  there exists  a $(C_4,K_{1,k})$-co-critical graph $G$ on $n$ vertices  such that
\begin{align*}
e(G)= \begin{cases}\frac{(k+2)n}{2}-\frac{(2+\varepsilon)k-4\varepsilon}{2} &\, \text{ if }\,     k \text{ is odd} \\
\frac{(k+2)n}{2}-\frac{(2-\varepsilon)k-2(1-2\varepsilon)}2 &\, \text{ if }\,     k \text{ is even} \\
\end{cases}
\end{align*}
where  $\varepsilon$ and  $\eps^*$ are  the remainders of $n$ and $k$ when divided by $2$, respectively.
\end{thm}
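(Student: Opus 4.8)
The plan is to construct, for each parity of $k$, an explicit $(C_4,K_{1,k})$-co-critical graph on $n$ vertices achieving the stated edge count, and then to verify the two defining properties: that $G$ admits a critical coloring, and that $G+e$ admits no critical coloring for every $e\in E(\overline{G})$. The target edge count is essentially the sum of a $C_4$-saturated red graph on roughly the full vertex set (contributing about $\tfrac{3n}{2}$ edges, matching Ollmann's bound in Theorem~\ref{C4saturated}) and a blue graph that is $K_{1,k}$-free with almost every vertex having blue-degree exactly $k-1$ (contributing about $\tfrac{(k-1)n}{2}$ edges). So the natural construction takes a sparse $C_4$-saturated graph $G_r$ of the extremal type described in Theorem~\ref{C4saturated} (the graphs in Figure~\ref{C4}) as the red part, and overlays a blue graph $G_b$ that is a near-$(k-1)$-regular $K_{1,k}$-free graph, arranged so that no red $C_4$ and no blue $K_{1,k}$ is created. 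The parity of both $k$ and $n$ governs the exact number of blue edges via the handshake constraint $e(G_b)=\tfrac12\sum_v d_b(v)$, which explains the four-way case split encoded by $\varepsilon$ and $\eps^*$ in the statement; the lower-order corrections come from the handful of vertices that cannot have blue-degree exactly $k-1$.

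First I would fix the red part $G_r$ to be one of Ollmann's extremal $C_4$-saturated graphs on $n$ vertices, so that $e(G_r)=\lfloor(3n-5)/2\rfloor$ and $G_r$ is already $C_4$-saturated on its own. Next I would design the blue part $G_b$ on a structured vertex set — most naturally by partitioning (almost all of) $V(G)$ into blue cliques or blue-regular clusters of a controlled size so that every vertex gets blue-degree $k-1$ and $G_b$ is $K_{1,k}$-free. The edges of $G_b$ must be chosen disjoint from $E(G_r)$, and crucially must avoid creating any new $C_4$ that is monochromatic red (they are blue, so this is automatic) while the red edges must avoid creating a blue $K_{1,k}$ (also automatic since colors are fixed); the real interaction constraint is that $G_r\cup G_b$ must be a simple graph with the prescribed total degree profile. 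I would then count $e(G)=e(G_r)+e(G_b)$ and check that it reduces to the two displayed formulas according to the parity of $k$, tracking the exceptional vertices that force the constant corrections.

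The heart of the argument is co-criticality, which splits into two verifications. For the easier direction, the fixed coloring $\tau$ (red on $G_r$, blue on $G_b$) is critical by construction: $G_r$ is $C_4$-free and $G_b$ is $K_{1,k}$-free. For the saturation direction I would take an arbitrary non-edge $e=xy\in E(\overline G)$ and show $G+e$ has no critical coloring, i.e.\ that $(G+e)\not\to$ splits fail for every recoloring. The clean way is to argue that for any $2$-coloring of $E(G+e)$, either the red subgraph already contains a $C_4$ or the blue subgraph contains a $K_{1,k}$; this is where the structural leverage of Lemma~\ref{lem:blue} and the $C_4$-saturation of $G_r$ enter. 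Because $G_r$ is $C_4$-saturated, adding $xy$ to the red graph forces a red $C_4$, so in any critical coloring of $G+e$ the edge $xy$ must be blue; then the blue-degree constraints (designed so that both endpoints already sit at blue-degree $k-1$, or can be pushed there) force a blue $K_{1,k}$. The remark in the theorem statement that the even-$k$ constructions admit at least three distinct critical colorings signals that for even $k$ the blue structure has extra symmetry (e.g.\ a clique of odd order admitting several near-perfect matchings to recolor), so I would need to check that \emph{none} of these alternative colorings can be extended to a critical coloring of $G+e$.

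The main obstacle I expect is precisely this saturation check across all non-edges simultaneously, because a critical coloring of $G+e$ is free to recolor \emph{all} edges, not just $e$. It is not enough that the single designed coloring breaks; I must rule out every critical coloring of $G+e$. The argument therefore hinges on showing that the designed overlap of the extremal $C_4$-saturated red skeleton with the blue $(k-1)$-regular clusters is rigid enough that any attempt to avoid a red $C_4$ forces the blue degrees up past $k-1$ at some vertex, and conversely. Pinning down the boundary vertices where the two structures meet — the small set $S$ of low blue-degree vertices controlled by Lemma~\ref{lem:blue}(i), together with the low-degree vertices of Ollmann's extremal graphs — and verifying that every non-edge incident to them still forces a monochromatic copy, is the delicate part and the place where the parity-dependent constant terms in the edge count are truly earned.
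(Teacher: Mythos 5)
Your proposal contains the right high-level accounting (red part near Ollmann's bound, blue part near $(k-1)$-regular), and you correctly identify the crux: a critical coloring of $G+e$ may recolor everything, so it is not enough that the one designed coloring breaks. But naming this obstacle is where your proposal stops, and that is precisely where the paper's entire proof lives. Your construction --- take one of Ollmann's extremal $C_4$-saturated graphs as the red skeleton and overlay a generic near-$(k-1)$-regular blue graph --- provides no mechanism to control the set of \emph{all} critical colorings of $G$. A generic overlay admits many critical colorings (e.g.\ recoloring a matching inside a blue cluster red, compensating elsewhere), and some of these will extend to $G+e$ for suitable non-edges $e$, so the resulting graph will in general fail to be co-critical. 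The paper avoids this by building a bespoke graph: two gadget vertices $x_1,x_2$ joined to a copy of $K_{k-1,k-1}$ so that $x_1x_2$ lies in exactly $2k-1$ triangles --- whence Lemma~\ref{lem:blue}(ii,iii) forces $x_1x_2$ red with prescribed blue neighborhoods --- together with a $k$-regular graph $R$ (decomposable into perfect matchings) to which $x_2$ is forced red-complete, so that the red edges inside $R$ are forced to be exactly a perfect matching. A cascade of claims then shows every critical coloring is the designed $\sigma$ up to symmetry when $k$ is odd, and one of an explicit short list when $k$ is even; only after this classification can one check that no critical coloring survives the addition of any non-edge. Your proposal asserts the needed ``rigidity'' of the Ollmann-plus-overlay graph but gives no construction-level reason for it, and in fact the rigidity fails for that construction. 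Relatedly, your step ``since $G_r$ is $C_4$-saturated, in any critical coloring of $G+e$ the edge $e$ must be blue'' is invalid as stated: saturation of the \emph{designed} red graph constrains only colorings whose red graph contains it, not arbitrary critical colorings of $G+e$.

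There is also a quantitative mismatch. If the red graph really were Ollmann-extremal with $\lfloor(3n-5)/2\rfloor$ edges and the blue graph exactly $(k-1)$-regular, then $e(G)=\frac{(k+2)n}{2}-3$ (for even $n$), which does not equal the theorem's formula $\frac{(k+2)n}{2}-k$ (odd $k$, even $n$); the two agree only at $k=3$. The paper's red graph under $\sigma$ is \emph{not} an extremal Ollmann graph (it has one more edge than $\lfloor(3n-5)/2\rfloor$ when $n$ is even), and its blue graph falls short of $(k-1)$-regularity at the few vertices $x_3,x_4,\dots$, which are pairwise adjacent as Lemma~\ref{lem:blue}(i) requires; these specific deficits are what produce the exact parity-dependent constants in the statement. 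So even setting aside co-criticality, the construction you sketch would not prove the exact edge counts claimed, only their asymptotic form.
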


\begin{proof}  Let $ k, n,\eps , \eps^* $ be as given in the statement.  Note that $p:=n-2k-2-\eps(2\eps^*-1)$ is always even and $p\ge k+1$. Let  $R_{\varepsilon,\eps^*}$   be a  $k$-regular  graph on $p$ vertices  such that $E(R_{\varepsilon,\eps^*})$ can be decomposed into $k$ pairwise disjoint  perfect matchings, and let $H:= K_{{k-1},{k-1}}$ be the complete bipartite graph with partite sets $X$ and $Y$.    Finally,  let $x\in X$ and $y\in Y$ and define $H_{\eps^*}$ to be $H$ when $\eps^*=1$ and $H\less\{xy\}$ when $\eps^*=0$. We now construct a $(C_4,K_{1,k})$-co-critical graph on $n$ vertices which yields the desired number of edges.

 Let  $G_{\varepsilon,\eps^*} $ be the graph obtained from  disjoint copies of $H_{\eps^*}$ and  $R_{\varepsilon,\eps^*}$ by adding the $4+(2\eps^*-1)\varepsilon$ new vertices from $A:=\{x_1,\ldots, x_{4+(2\eps^*-1)\varepsilon}\}$ and then,   when $\eps^*=1$, joining $x_1$ to all vertices in $V(H_{\eps^*})\cup \{x_2,x_3,x_4 \}$,  $x_2$ to all vertices in $V(H_{\eps^*})\cup V(R_{\varepsilon,1})\cup (A\less\{x_1, x_2, x_3\})$,  and edges between each pair of vertices in $A\less\{x_1, x_2\}$;  and when $\eps^*=0$, joining $x_1$ to all vertices in $V(H_{\eps^*})\cup \{x_2,\ldots, x_{4-\eps} \}$,   $x_2$ to all vertices in $V(H_{\eps^*})\cup V(R_{\varepsilon,0})\cup \{x_{4-\varepsilon}\}$ and  $x$ to $x_{4-\varepsilon}$. Furthermore,    $x_3$ is adjacent to  both $x_4$  and   $y$ when $\eps=0$ and $\eps^*=0$.
 The construction of  $G_{\varepsilon, 1}$  is  depicted in Figure~\ref{C41} when $k=7$, and the construction of  $G_{\eps, 0}$  is  depicted in Figure~\ref{C4K1,81} when $k=8$ and $n$ is odd, and in Figure~\ref{C4K1,82} when $k=8$ and $n$ is even.
  It can be easily checked that  $e(G_{\varepsilon, 1}) =   ((k+2)n -(2+\varepsilon)k+4\varepsilon)/2$, and $e(G_{\varepsilon,0}) =  ((k+2)n -(2-\varepsilon)k+2(1-2\varepsilon))/2$.  It suffices to show that $G_{\varepsilon, \eps^*}$ is  $(C_4, K_{1,k})$-co-critical.

\begin{figure}[htbp]
\centering
\includegraphics[scale=0.7]{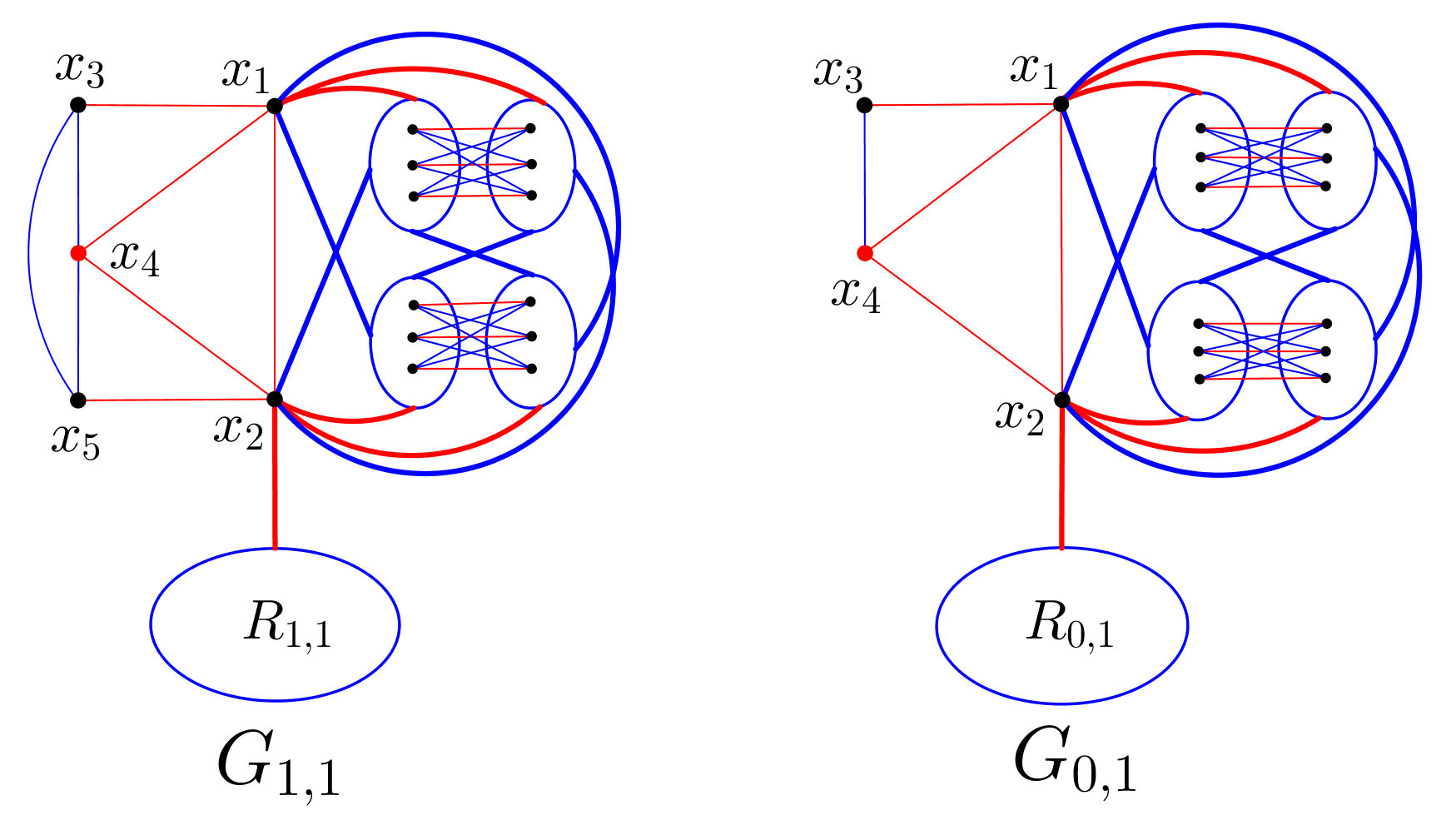}
\caption{Two $(C_4,K_{1,k})$-co-critical graphs with a unique critical coloring when $k=7$.}
\label{C41}
\end{figure}

 Let $\tau:E(G_{\varepsilon, \eps^*}) \rightarrow \{$red, blue$\}$ be an arbitrary critical coloring of   $G_{\varepsilon, \eps^*}$ with color classes $E_r$ and $E_b$.     Let $G^\tau_r$ and $G^\tau_b$ be  $G_r$ and $G_b$ under the coloring $\tau$, respectively.   Then $G^{\tau}_r$ is $C_4$-free and $G^{\tau}_b$ is $K_{1,k}$-free. Thus $\Delta(G^{\tau}_b)\le k-1$.
Note that $x_1x_2$ belongs to exactly $2k-1$ triangles in $G_{\varepsilon, \eps^*}$.     By Lemma~\ref{lem:blue}(ii,iii),  $x_1x_2\in E_r$, $|N_r(x_1)\cap N_r(x_2)|=1$, $N_b(x_i)\subseteq N(x_1)\cap N(x_2)$ for $i\in[2]$ and \medskip

\setcounter{counter}{0}
\noindent {\bf Claim\refstepcounter{counter}\label{unique}  \arabic{counter}.}
    $|N_b(x_1)\cap (N(x_1)\cap N(x_2)) |=|N_b(x_2)\cap (N(x_1)\cap N(x_2))|=k-1$, and  each vertex $v$ in $ N(x_1)\cap N(x_2)$ with $v\ne u$ belongs to exactly one of $N_b(x_1)$ and $N_b(x_2)$.  \medskip

\noindent {\bf Claim\refstepcounter{counter}\label{x1x3}  \arabic{counter}.}
    $ x_2$ is red-complete to $V(R_{\eps, \eps^*})$.    \medskip

\noindent {\bf Claim\refstepcounter{counter}\label{coloringR}  \arabic{counter}.} $E_r\cap E(R_{\varepsilon,\eps^*})$ is a perfect matching of $R_{\varepsilon,\eps^*}$ and all edges in $ E(R_{\varepsilon,\eps^*})\less E_r$ are colored blue.
     
 \begin{proof}
By Claim~\ref{x1x3}, $G_r^\tau[V(R_{\varepsilon,\eps^*})]$ is $P_3$-free because $G_r^\tau$ is $C_4$-free. Since $R_{\varepsilon,\eps^*}$ is $k$-regular and $\Delta(G_b^\tau)\le k-1$, we see that   $E_r\cap E(R_{\varepsilon,\eps^*})$ is a perfect matching of $R_{\varepsilon,\eps^*}$ and all edges in $ E(R_{\varepsilon,\eps^*})\less E_r$ are colored blue. 
\end{proof}

Let $u$ be the only vertex in $N_r(x_1)\cap N_r(x_2)$.    Then $ux_1, ux_2\in E_r$.  Since    $G_r^\tau$ is $C_4$-free, by Claim~\ref{unique}, we have  \medskip

\noindent {\bf Claim\refstepcounter{counter}\label{notu}  \arabic{counter}.}
  $v\notin  N_r(u)$  for each $v\in N(x_1)\cap N(x_2) $ with $v\ne u$. \medskip

\noindent {\bf Claim\refstepcounter{counter}\label{nored2}  \arabic{counter}.}
 For each $v\in X\cup Y$, we have $|N_r(v)\cap (X\cup Y) |\le 1$.
\begin{proof}
Suppose there exist $v\in X\cup Y$ and $y_1, y_2\in X\cup Y$ with $y_1\ne y_2$ such that $vy_1, vy_2\in E_r$.  We may assume that $v\in X$ and $y_1, y_2\in   Y$.  Note that $v, y_1, y_2\in N(x_1)\cap N(x_2)$. By Claim~\ref{unique},  each vertex in $\{v, y_1, y_2\}$ is adjacent to $x_1$ or $x_2$ in $G_r^\tau$, and so $G^{\tau}_r[\{x_1, x_2,  v, y_1, y_2\}]$ has a copy of  $C_4$, a contradiction.  
\end{proof}

\noindent {\bf Claim\refstepcounter{counter}\label{redU}  \arabic{counter}.}
  $ N_r(u)=\{x_1, x_2\} $.

\begin{proof}
Suppose $ N_r(u)\less \{x_1, x_2\}\ne \emptyset$. Let   $u'\in N_r(u)$ with  $u'\notin\{x_1, x_2\}$. Then $u'x_1, u'x_2\in E_b$ because $G^{\tau}_r$ is $C_4$-free. By Claim~\ref{notu}, $u'\notin N[x_1]\cap N[x_2]$. Thus  $u'\in A\less N[x_1]\cap N[x_2]$. It follows that     $\eps=\eps^*=0$, $u'=x_3$, and  $u\in\{x_4,y\}$.    Thus  $x_3=u'\in N_b(x_1)$, contrary to the fact that $N_b(x_1)\subseteq N(x_1)\cap N(x_2)$.  
\end{proof}

\noindent {\bf Claim\refstepcounter{counter}\label{vw}  \arabic{counter}.}
For each $vw\in E_r$ with $v, w\in N(x_1)\cap N(x_2)$,   $x_i$ is red-complete to $\{v,w\}$ and  $x_{3-i}$ is blue-complete to $\{v,w\}$ for some $i\in[2]$.

\begin{proof} This follows immediately from the fact that $G^\tau_r$ is $C_4$-free. 
\end{proof}

We first consider the case when $\eps^*=1$, that is, when $k$ is odd.  Let $\sigma: E(G_{\varepsilon, 1}) \rightarrow \{$red, blue$\}$ be defined as follows: $H_{\eps^*}=H$     has a perfect matching $M$ colored red;  $R_{\varepsilon, 1}$ has a perfect matching colored red;  all edges between $x_1$ and   $V(M_1)\cup \{x_2,x_3, x_4\}$ are colored red;  all edges between $x_2$ and   $V(M_2)\cup V(R_{\varepsilon, 1})\cup (A\less \{x_1,x_2, x_3\})$ are colored red; the remaining edges of $G_{\varepsilon,1}$ are all colored blue, where $\{M_1, M_2\}$ is an equal partition of $M$. The coloring    $\sigma $ is  depicted in   Figure~\ref{C41}  when $k=7$.   We next show that $\tau=\sigma$ up to symmetry.\medskip

Recall that $u$ is the only neighbor of $x_1$ and $x_2$ in $G_r^\tau$. We claim that $u=x_4$.  Suppose $u\ne x_4$. Then $u\in V(H)$. By symmetry, we may assume that $u\in X$.   Let  $v\in Y$.   By Claim~\ref{redU}, $ uv\in E_b$. By Claim~\ref{unique},  either $vx_1\in E_r$ or  $vx_2\in E_r$.     Since $d_b(v)\le k-1$, we see that  $N_r(v)\cap (X\less \{u\})\ne \emptyset$. By the arbitrary choice of $v$,  there exists $z\in X\less\{u\}$  such that $|N_r(z)\cap Y|\ge2$, contrary to Claim~\ref{nored2}.  This proves that $u=x_4$, as claimed. 
By Claim~\ref{unique},   $x_1x_3,  x_2x_{4+\varepsilon} \in E_r$, $G^\tau_b[A\less \{x_1, x_2\}]=K_{2+\varepsilon}$,     and $N_b(x_i) =N_r(x_{3-i})\cap V(H)$ for each $i\in[2]$. Let   $H^1:=G_{\eps,1}[N_r(x_1)\cap V(H)]$  and $H^2:=G_{\eps,1}[N_r(x_2)\cap V(H)]$. Then $|H^1|=|H^2|=k-1$ is even. 
 By Claim~\ref{coloringR},   $E_r\cap E(R_{\varepsilon,1})$ is a perfect matching of $R_{\varepsilon,1}$ and all edges in $ E(R_{\varepsilon,1})\less E_r$ are colored blue.   We next show that $E_r\cap E(H)$ is a perfect matching of $H$ and   $E_r\cap E(H^i)$ is a perfect matching of $H^i$ for each $i\in[2]$.  Let $v\in X$. We may assume that $v\in V(H^1)$. Since $d_b(v)\le k-1$ and $vx_2\in E_b$, we see that $vw\in E_r$ for some $w\in Y$.  Then $wx_1\in E_r$ by Claim~\ref{vw}. Thus   $N_r(v)\cap Y=\{w\}$ and $N_r(w)\cap X=\{v\}$ by Claim~\ref{nored2}.  This proves that   $E_r\cap E(H)$ is a perfect matching of $H$ and $E_r\cap E(H^i)$ is a perfect matching of $H^i$ for each $i\in[2]$. Thus all edges in $ E(H)\less E_r$ are colored blue. By symmetry, we may assume that $E_r\cap E(H)=M$ and $E_r\cap E(H^i)=M_i$ for each $i\in[2]$, and so $\tau=\sigma$ up to symmetry. \medskip

\begin{figure}[htbp]
\centering
\includegraphics[scale=0.6]{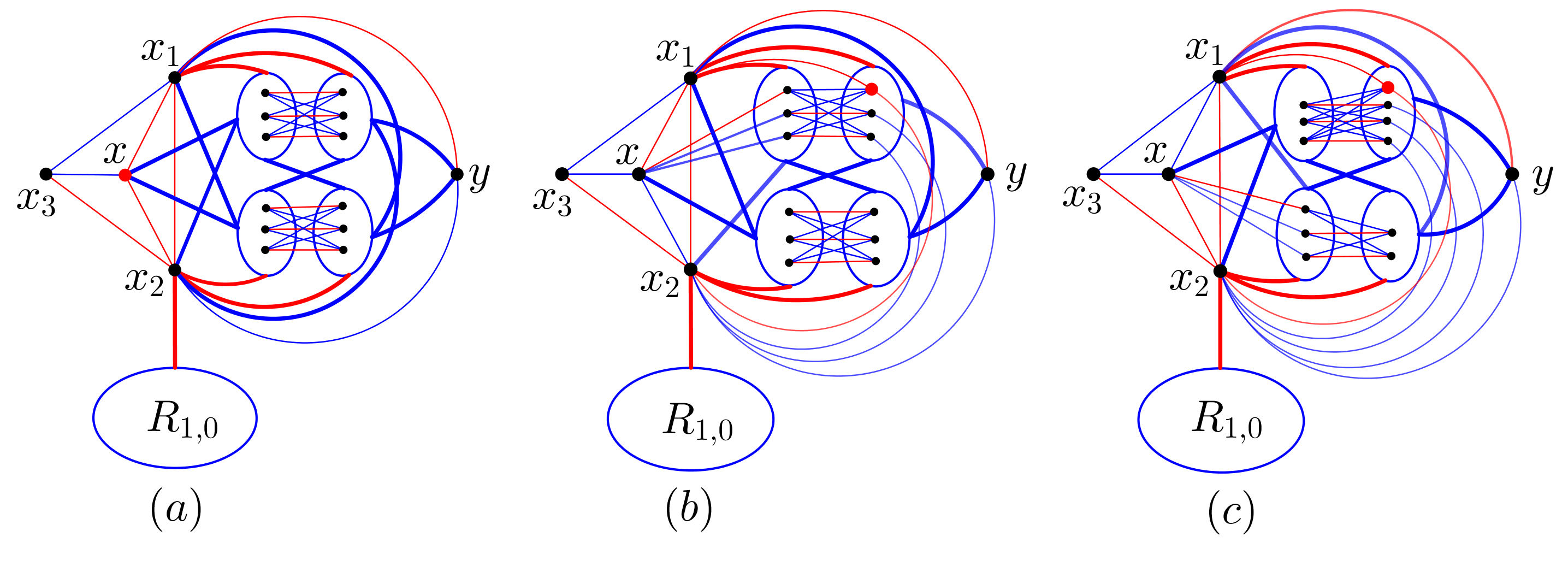}
\includegraphics[scale=0.6]{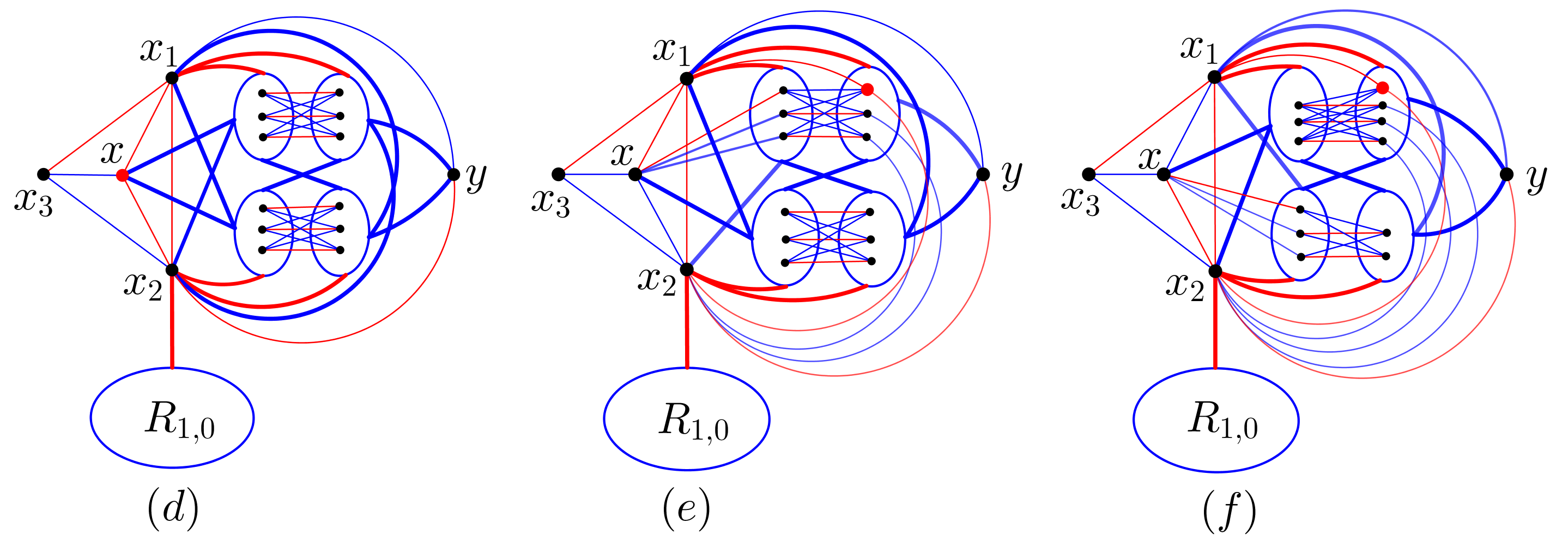}
\caption{A $(C_4,K_{1,k})$-co-critical graph with exactly six different critical colorings when $k=8$ and $n$ is odd, where the vertex $u$ is in red.}
\label{C4K1,81}
\end{figure}

\begin{figure}[htbp]
\centering
\includegraphics[scale=0.65]{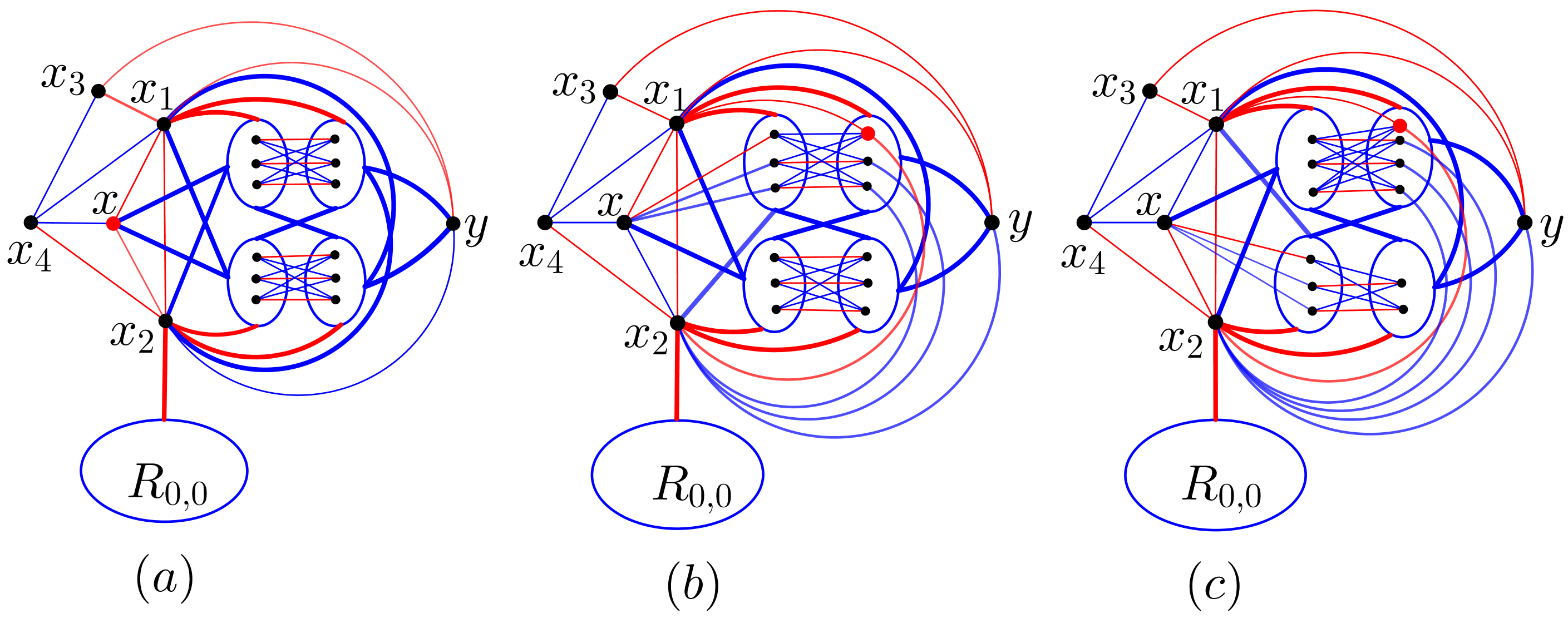}
\caption{A $(C_4,K_{1,k})$-co-critical graph with exactly three different critical colorings when $k=8$ and $n$ is even, where the vertex $u$ is in red.}
\label{C4K1,82}
\end{figure}

It remains to consider the case when $\eps^*=0$, that is, when $k$ is even.   Then $k\ge4$. This case is more involved as critical colorings of $G_{\eps, 0}$  are no longer  unique   up to symmetry. 
Recall that in this case $N(x_1)\cap N(x_2)=X\cup Y\cup \{x_{4-\eps}\}$, and $u$ is the only vertex in $N_r(x_1)\cap N_r(x_2)$.    
We next prove several claims. \medskip

\noindent {\bf Claim\refstepcounter{counter}\label{fullblue}  \arabic{counter}.}
 For each $v\in X\cup Y\cup V(R_{\eps,0})$,   we have $d_b(v)= k-1 $.
\begin{proof}
Suppose there exists $v\in X\cup Y\cup V(R_{\eps,0})$   such that $d_b(v)\le  k-2 $.  Note that $k\ge4$.  We first consider the case that  $x_3v\notin E(G)$.  If $d_b(x_3)\le k-2$, then  we obtain a critical coloring of $G_{\eps,0}+ vx_3$ from $\tau$ by coloring $vx_3$ blue,  a contradiction.  Thus $d_b(x_3)=k-1$, and so $k=4$ and $\eps=0$. In particular, $x_3\in N_b(x_1)$, contrary to the fact that $N_b(x_1)\subseteq N(x_1)\cap N(x_2)$. This proves that $x_3v\in E(G)$. Then $v=x$ when   $\eps=1$, and $v=y$  when $\eps=0$. In both cases, we have  $d_r(v)\ge3$ and so $u\ne v$ by Claim~\ref{redU}. By Claim~\ref{notu}, $v\notin N_r(u)$.  But then $|N_r(v)\cap Y|\ge2$ when $v=x$, contrary to Claim~\ref{nored2}. Thus $v=y$  and  $\eps=0$.  By Claim~\ref{nored2}, $yx_3\in E_r$.  
Note that $x_1x_3\in E_r$ by Claim~\ref{unique}. Thus $yx_2\in E_b$ because $G_r^\tau$ is $C_4$-free, and so $yx_1\in E_r$. Let $w\in X\cap N_r(y)$. But then $wx_1\in E_r$ or $x_2w\in E_r$,  and so $G_r^\tau[\{x_1, x_2, x_3, y, w\}]$ contains a copy of $C_4$, a contradiction.  
\end{proof}

\noindent {\bf Claim\refstepcounter{counter}\label{red1}  \arabic{counter}.}
 For each $v\in X\cup Y$ with $v\notin\{u, y\}$,   we have $|N_r(v)\cap (X\cup Y) |=1$.
\begin{proof} Suppose $|N_r(v)\cap (X\cup Y) |\ne1$ for some $v\in X\cup Y$ with $v \ne u$ and $v\ne y$. By Claim~\ref{nored2}, $|N_r(v)\cap (X\cup Y) |=0$. By Claim~\ref{unique}, $v$ belongs to exactly one of $N_b(x_1)$ and $N_b(x_2)$.  By 
Claim~\ref{fullblue}, $d_b(v)=k-1$, and so     $v= x$ and $xx_{4-\eps}\in E_r$. By Claim~\ref{vw}, 
$x_i$ is red-complete to $\{x, x_{4-\eps}\}$ for some $i\in[2]$.
 We may assume that $yx_\ell\in E_r$ for some $\ell\in[2]$.   For each $v\in N_b(x_\ell)\less\{x, x_{4-\eps}\}$, we see that  $|N_r(v)\cap (X\cup Y) |=1$. Note  that $G_r^\tau[N_b(x_\ell)]$ is $P_3$-free because $x_{3-\ell}$ is red-complete to $N_b(x_\ell)$; and  $|N_b(x_\ell)\less\{x, x_{4-\eps}\}|\in\{k-3, k-1\}$, in particular,  $|N_b(x_\ell)\less\{x, x_{4-\eps}\}|$ is odd. It follows that there exist $w\in N_b(x_\ell)\less\{x, x_{4-\eps}\}$ and $w'\in N_b(x_{3-\ell})$ such that $ww'\in E_r$, contrary to Claim~\ref{vw}.
\end{proof}

By Claim~\ref{unique}, 
    $|N_b(x_i) |=k-1$  and $N_b(x_i)\subseteq N_r(x_{3-i})\cap (N(x_1)\cap N(x_2)$ for each $i\in[2]$.    
Recall that   $ k-1\ge3$  is odd in this case. Let   $H^1:=G_{\eps,0}[N_b(x_1))]$  and $H^2:=G_{\eps,0}[N_b(x_2)]$. Then $|H^1|=|H^2|=k-1$ is odd and both $H^1$ and $H^2$ are $P_3$-free because $G^\tau_r$ is $C_4$-free. Claim~\ref{blue0} follows from Claim~\ref{vw} and Claim~\ref{red1}.   \medskip

\noindent {\bf Claim\refstepcounter{counter}\label{blue0}  \arabic{counter}.}
 For each $i\in[2]$, there exists $z_i\in N_b(x_i)$ such that $N_r(z_i)\cap (X\cup Y)=\emptyset$,   and so $E_r\cap E(H^i)$ is a perfect matching of $H^i\less \{z_i\}$. \medskip

 By Claim~\ref{red1}, $z_1, z_2\in\{x_{4-\eps}, y\}$.  We may assume $y=z_\ell$ for some $\ell\in [2]$. Then $y\in N_b(x_\ell)$ and $x_{4-\eps} =z_{3-\ell}\in N_b(x_{3-\ell})$. It follows from Claim~\ref{blue0} that  \medskip
 
\noindent {\bf Claim\refstepcounter{counter}\label{bluey}  \arabic{counter}.}
  $u\notin\{y, x_{4-\eps}\}$, $y$ is blue-complete to    $X\less\{x\}$, $xx_{4-\eps}\in E_b$ and $yx_{3-\ell}, x_{4-\eps}x_\ell\in E_r$.  Moreover, $yx_3\in E_r$ when $\eps=\eps^*=0$.\medskip
  
\noindent {\bf Claim\refstepcounter{counter}\label{uinX}  \arabic{counter}.}
$u\in X$.

\begin{proof} 
Suppose $u\notin X$.   By Claim~\ref{bluey},   $u\in Y\less\{y\}$ and $xx_{4-\eps}\in E_b$. For each $v\in X$, by    Claim~\ref{red1},    there exists a vertex $z\in Y$ with $z\notin\{u,y\}$ such that $vz\in E_r$.      Since $|X|>|Y\less\{u,y\}|$,    there must exist  $z'\in Y\less\{u,y\}$ such that $|N_r(z')\cap X|\ge2$,  contrary to Claim~\ref{red1}. \end{proof}

 By  Claim~\ref{blue0}, 
$E_r\cap E(H^\ell)$ is a perfect matching of  $H^\ell\less\{y\}$  and $E_r\cap E(H^{3-\ell})$ is a perfect matching of  $H^{3-\ell}\less\{x_{4-\eps}\}$. 
 Note that when $\eps=\eps^*=0$,  we have $yx_3\in E_r$ by Claim~\ref{bluey}, and so $yx_2\in E_b$, that is, $\ell=2$, because $G_r^\tau$ is $C_4$-free. By Claim~\ref{coloringR},   $E_r\cap E(R_{\varepsilon,0})$ is a perfect matching of $R_{\varepsilon,0}$ and all edges in $ E(R_{\varepsilon,0})\less E_r$ are colored blue.  By Claim~\ref{uinX}, we have $u\in X$. Assume first  
$u=x$.    Recall that $\ell\in [2]$,  and $yx_\ell\in E_b$ and $yx_{3-\ell}\in E_r$. The cases of $\tau$ are depicted below when $k=8$:   $\tau$ is as given in Figure~\ref{C4K1,81}(a,d) when  $\eps=1$ and $\eps^*=0$,  and  in Figure~\ref{C4K1,82}(a) when   $\eps= \eps^*=0$.  Assume next     $u\in X\less\{x\}$. By Claim~\ref{unique}, $x $  belongs to exactly one of $N_b(x_1)$ and $N_b(x_2)$. The cases of $\tau$ are depicted below when $k=8$: when $xx_1\in E_r$,  then $\tau$ is as given in Figure~\ref{C4K1,81}(b,e) when  $\eps=1$ and $\eps^*=0$,  and  in Figure~\ref{C4K1,82}(b) when   $\eps= \eps^*=0$. When $xx_2\in E_r$,  then $\tau$ is as given in Figure~\ref{C4K1,81}(c,f) when  $\eps=1$ and $\eps^*=0$,  and  in Figure~\ref{C4K1,82}(c) when   $\eps= \eps^*=0$.  \medskip

This completes the proof of Theorem~\ref{kngele}.
\end{proof}

\section{Proof of Theorem~\ref{C4P31}}\label{sec:C4P3}

In this section we prove  \cref{C4P31}, which we restate here for convenience.\threeclaw*

\begin{proof}
We first  prove  that every $(C_4,K_{1,2})$-co-critical graph on $n\ge5$ vertices has at least $2n-3$ edges. Suppose the statement is false. Let  $G$ be a  $(C_4,K_{1,2})$-co-critical graph on $n\geq 5$ vertices with $e(G)\leq 2n-4$.  Recall that  $G$ admits  at least one critical coloring. Among all critical colorings of $G$, let $\tau:E(G)\longrightarrow \{$red, blue$\}$ be a critical coloring of $G$ with color classes $E_r$ and $E_b$ such that $|E_r|$ is maximum.  By the choice of $\tau$, we see that $G_r$ is $C_4$-saturated and $G_b$ is $K_{1,2}$-free.  Thus $|E_r|=e(G_r)\ge \lfloor(3n-5)/2\rfloor$ by Theorem~\ref{C4saturated},  and  $\Delta(G_b)\leq 1$.  Let $S:=\{v\in V(G)\mid d_b(v)=0\}$. By Lemma~\ref{lem:blue}(i),  $|S|\le3$ and so $|E_b|=e(G_b)= (n-|S|)/2\ge(n-3)/2$.

  Suppose first   $n$ is odd.     Then $|S|=3$, else $|S|=1$ and  $e(G)=e(G_r)+e(G_b)\ge  (3n-5)/2 + (n-1)/2>2n-4$, a contradiction.  It follows that $e(G_r)=  (3n-5)/2$ and $e(G_b)=(n-3)/2$.  By Lemma~\ref{lem:blue}(i),  $ G_r[S]=K_3$.   By Theorem~\ref{C4saturated}, $G_r$ is   isomorphic to the graph given in Figure~\ref{C4}(b) or Figure~\ref{C4}(c).  Let $H$ be the  triangle in the center of Figure~\ref{C4}(b).  Note that every edge in $G_r[S]$ can be recolored blue without creating a blue $K_{1,2}$.  Let $S:=\{v_1, v_2, v_3\}$.  We may assume that $2\le d_r(v_1)\le d_r(v_2)\le d_r(v_3)$. 
 Suppose $N_r(v_2)=\{v_1, v_3\}$. Then $N_r(v_1)=\{v_2, v_3\}$,  $n\ge 7$ and $d_r(v_3)\ge 4$.  Let $u\in N_r(v_3)$ with $u\notin \{v_1, v_2\}$. Then $uv_1\notin E(G)$. Thus  $u$   belongs  to a triangle in $G_r$,  else  we obtain a critical coloring of $G+uv_1$ from $\tau$ by first coloring the edge $uv_1$ red and then recoloring the edge $v_2v_3$ blue, a contradiction.   It follows that  $G_r$ is   isomorphic to the graph given in Figure~\ref{C4}(b).  Then $v_3\in V(H)$.  We may further assume that $u\in V(H)$  such that there exists $ v\in N_r(u)$  with   $d_r(v)=1$. But then we obtain a critical coloring of $G+vv_1$ from $\tau$ by first coloring the edge $vv_1$ red and then recoloring the edge $v_1v_3$ blue, a contradiction. This proves that $ \{v_1, v_3\}\subsetneq N_r(v_2) $, and so $d_r(v_2)\ge3$. Thus
 $G_r$ is   isomorphic to the graph given in Figure~\ref{C4}(b) and $G_r[S]=H$. We may assume that  there exists $w\in N_r(v_3)$ such that  $d_r(w)=1$. But then we obtain a critical coloring of $G+wv_1$ from $\tau$ by first coloring the edge $wv_1$ red and then recoloring the edge $v_1v_2$ blue, a contradiction.

   Suppose next  $n$ is even. Since $e(G_b)=(n-|S|)/2$, we see that   $|S|  $ is even.  Then $|S|=2$, else $|S|=0$ and  $e(G)=e(G_r)+e(G_b)\ge  (3n-6)/2 + n/2>2n-4$, a contradiction.  It follows that $e(G_r)=  (3n-6)/2$ and $e(G_b)=(n-2)/2$.    By Theorem~\ref{C4saturated}, $G_r$ is   isomorphic to the graph given in Figure~\ref{C4}(a) and so $n\ge6$.   Let $H$ be the  triangle in the center of Figure~\ref{C4}(a) and let    $S:=\{v_1, v_2\}$.   By Lemma~\ref{lem:blue}(i),  $v_1v_2\in E_r$.  We may assume that $d_r(v_1)\le d_r(v_2) $.  Then $d_r(v_2)\ge 2$.   Suppose $d_r(v_2)=2$.  Then $d_r(v_1)=2$ and $v_1v_2$ belongs to a triangle in $G_r$.  Let $\{v_3\}=N_r(v_1)\cap N_r(v_2)$. Then  $v_3$ has a neighbor $u$ in $G_r$ such that $d_r(u)=1$.     But then  $uv_1\notin E(G)$ and    we obtain a critical coloring of $G+uv_1$ from $\tau$ by first coloring the edge $uv_1$ red and then recoloring the edge $v_1v_2$ blue, a contradiction. Thus $d_r(v_2)\ge 3$, and so $v_2 \in V(H)$.  Let $u\in V(H)$  with $u\ne v_2$ and let $v\in N_r(u)$ with $d_r(v)=1$.   If  $u=v_1$, then $uv\in E_r$ and so $vw\notin E(G)$, where $w$ is the vertex in $V(H)\less\{v_1, v_2\}$. But then we obtain a critical coloring of $G+vw$ from $\tau$ by first coloring the edge $vw$ red and then recoloring the edge $v_1v_2$ blue, a contradiction.  Thus   $u\ne v_1$. By the arbitrary choice of $u$, we may assume that  $v_1\notin V(H)$.   But then $vv_1\notin E(G)$ and   we obtain a critical coloring of $G+vv_1$ from $\tau$ by first coloring the edge $vv_1$ red and then recoloring the edge $v_1v_2$ blue, a contradiction.  This proves that every $(C_4,K_{1,2})$-co-critical graph on $n\ge5$ vertices has at least $2n-3$ edges.

 We next show that the bound in Theorem~\ref{C4P31} is sharp for all $n\ge 5$ by constructing a $(C_4,K_{1,2})$-co-critical graph with the desired number of edges. Let $n\ge 5$ be an integer and let $\varepsilon$ be the remainder of $n$ when divided by $2$.  Let $G_\varepsilon$ be the graph obtained from the cycle $C_{n-6+\varepsilon}$ by adding $6-\varepsilon$ new vertices $x_1,x_2,x_3,y_1,\ldots, y_{3-\varepsilon}$ and then joining: $x_3$ to all vertices in $V(C_{n-6+\varepsilon})\cup \{x_1,x_2,y_1,y_2\}$,   $x_1$ to all vertices in $\{x_2,y_1,y_3\}$ and  $x_2$ to all vertices in $\{y_2,y_3\}$ when   $\varepsilon=0$; $x_3$ to all vertices in $V(C_{n-6+\varepsilon})\cup \{x_1,x_2,y_1,y_2\}$ and $x_2$ to all vertices in $\{x_1,y_1,y_2\}$  when $\varepsilon=1$.  The construction of  $G_\varepsilon$  is  depicted in Figure~\ref{C4P3}.  It can be easily checked that  $e(G_\varepsilon) = 2n-3 $. It suffices to show that $G_\varepsilon$ is  $(C_4, K_{1,2})$-co-critical.

\begin{figure}[htbp]
\centering
\includegraphics[scale=0.8]{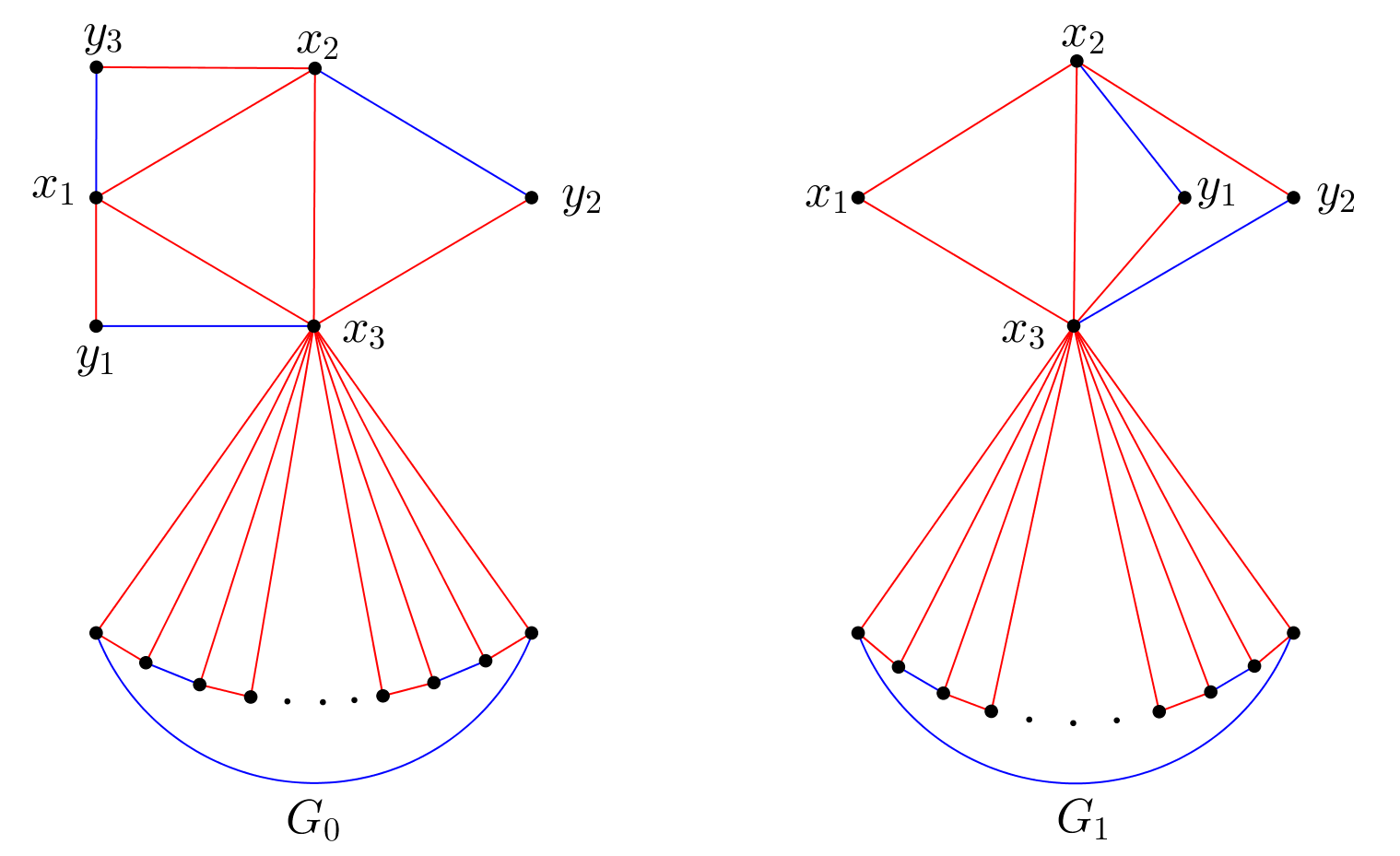}
\caption{Two $(C_4,K_{1,2})$-co-critical graphs with a unique critical coloring.
}
\label{C4P3}
\end{figure}

 Let  $\sigma: E(G_\varepsilon) \rightarrow \{$red, blue$\}$ be defined as  depicted in Figure~\ref{C4P3}.  It is simple to check that   $\sigma $   is a critical coloring  of $G_\varepsilon$. We next show that $\sigma$ is the unique critical coloring  of $G_\varepsilon$    up to symmetry.

  Let $\tau:E(G_\varepsilon) \rightarrow \{$red, blue$\}$ be an arbitrary critical coloring of   $G_\varepsilon$ with color classes $E_r$ and $E_b$.  It suffices to show that $\tau=\sigma$ up   to symmetry.  Let $G^\tau_r$ and $G^\tau_b$ be  $G_r$ and $G_b$ under the coloring $\tau$, respectively.   Then $G^{\tau}_r$ is $C_4$-free and $G^{\tau}_b$ is $K_{1,2}$-free. Thus $\Delta(G^{\tau}_b)\le 1$.
We first consider the case  when $\varepsilon=0$.  By Lemma~\ref{lem:blue}(ii),  $x_1x_2, x_1x_3, x_2x_3\in E_r$.  Since $G^{\tau}_r$ is $C_4$-free, we see that $d_b(y_i)=1$ for each $i\in[3]$. By symmetry, we may assume that $x_1y_3\in E_b$. Since $G^{\tau}_b$ is $K_{1,2}$-free, we see that $x_1y_1, x_2y_3\in E_r$, $x_2y_2, x_3y_1\in E_b$ and $x_3v\in E_r$ for each $v\in V(C_{n-6})\cup\{y_2\}$.  It then follows that $E_r\cap E(C_{n-6})$  and $E_b\cap E(C_{n-6})$ are two disjoint perfect matchings of $C_{n-6}$. Hence $\tau=\sigma$ up   to symmetry.  We next consider the case  when $\varepsilon=1$. By Lemma~\ref{lem:blue}(ii),  $x_2x_3\in E_r$.  Since $G^{\tau}_r$ is $C_4$-free, we see that $d_b(x_2)=d_b(x_3)=1$. By symmetry, we may assume that $x_2y_1, x_3y_2\in E_b$.  Since $G^{\tau}_b$ is $K_{1,2}$-free, we see that $x_2x_1, x_2y_2\in E_r$  and $x_3v\in E_r$ for each $v\in V(C_{n-5})\cup\{x_1, y_1\}$.  It then follows that $E_r\cap E(C_{n-5})$  and $E_b\cap E(C_{n-5})$ are two disjoint perfect matchings of $C_{n-5}$. Hence $\tau=\sigma$ up   to symmetry.

This completes the proof of Theorem~\ref{C4P31}.
\end{proof}

 \section*{Acknowledgement}
 The third author would like to thank John Schmitt for sending her reference~\cite{Ollmann}.

\end{document}